\documentclass[10pt]{article}

\usepackage[T1]{fontenc}
\usepackage[utf8]{inputenc}
\usepackage{amsmath,amssymb,amsthm,mathtools}
\usepackage{txfonts}
\usepackage{enumitem}
\usepackage[
  paperwidth=210mm,
  paperheight=285mm,
  left=15mm,
  right=15mm,
  top=25mm,
  bottom=19.5mm
]{geometry}
\usepackage[hidelinks]{hyperref}
\usepackage{titlesec}

\allowdisplaybreaks
\titleformat{\section}{\bfseries\normalsize}{\thesection.}{0.45em}{}
\titleformat{\subsection}{\bfseries\small}{\thesubsection}{0.45em}{}
\titlespacing*{\section}{0pt}{0.75em}{0.25em}
\titlespacing*{\subsection}{0pt}{0.65em}{0.20em}

\newtheoremstyle{published}%
  {5pt}{5pt}{\itshape}{}
  {\bfseries}{}{0.5em}
  {\thmname{#1}\thmnumber{ #2}\thmnote{ #3}}
\theoremstyle{published}
\newtheorem{theorem}{Theorem}
\newtheorem{proposition}{Proposition}
\newtheorem{lemma}{Lemma}

\newcommand{\R}{\mathbb{R}}
\newcommand{\Om}{\Omega}
\newcommand{\Gam}{\Gamma}
\newcommand{\norm}[2]{\left\lVert #1\right\rVert_{#2}}
\newcommand{\abs}[1]{\left\lvert #1\right\rvert}
\newcommand{\dd}{d}
\newcommand{\diver}{div}

\begin{document}

{\centering
\fontsize{17.25}{22.46}\selectfont
A Priori and A Posteriori Error Estimates for a Crank Nicolson Type\\
Scheme of an Elliptic Problem with Dynamical Boundary Conditions\par}

\vspace{8pt}
{\centering
Rola Ali Ahmad\textsuperscript{1}, Toufic El Arwadi\textsuperscript{1}, Houssam Chrayteh\textsuperscript{1} \& Jean-Marc Sac-Ep\'ee\textsuperscript{2}\par}

\vspace{5pt}
\noindent\textsuperscript{1} Beirut Arab University, Lebanon\\[5pt]
\textsuperscript{2} Lorraine University, France\\[5pt]
Correspondence: Toufic El Arwadi, Beirut Arab University, Lebanon. E-mail: t.elarwadi@bau.edu.lb

\vspace{13pt}
\noindent\textbf{Abstract}

In this article we claim that we are going to give a priori and a posteriori error estimates for a Crank Nicolson type scheme.
The problem is discretized by the finite elements in space. The main result of this paper consists in establishing two types
of error indicators, the first one linked to the time discretization and the second one to the space discretization.

\noindent\textbf{Keywords:} A priori and a posteriori error estimates, Crank Nicolson type scheme, finite elements

\section{Introduction}

Let $\Om$ be a bounded smooth sub domain of $\R^2$ and
$\gamma(x)=[\gamma_{i,j}]_{i,j=1}^{n}$ be a real positive definite matrix-valued function. Let $(0,T)$ denote a subinterval
of $\R$ where $T\in(0,\infty)$ is a fixed final time. Denote by $n(x)$ the unit outward normal vector at
$x\in\Gam$. We intend to work with the following problem,
\begin{equation}
\begin{cases}
\diver(\gamma\nabla u)=0, & \text{in }(0,T)\times\Om\\
\dfrac{\partial u}{\partial t}(t,x)+\gamma n(x).\nabla u(t,x)=0,
   & \text{on }(0,T)\times\Gam\\
u(0,x)=u_0(x), & \text{on }\Gam
\end{cases}
\label{eq:continuous-problem}
\end{equation}
where $\Gam$ is the boundary, $u$ is the unknown and $u_0$ is the initial condition at time $t=0$.

The solution of the above problem can be represented on the boundary by the Dirichlet-to-Neumann semigroup
(Vrabie, 2003) defined as
\[
  (S (t)f) (x)=u(t,x)|_{\Gam}
\]

In (Cherif, Arwadi, Emmamirad \& Sac Epee, 2014), the authors showed that the Lax semigroup is the
Dirichlet-to-Neumann semigroup in the particular case where $\Om=B (0,1)$ is the unit ball of $\R^2$ and $\gamma(x)$
is the identity matrix. P. Lax showed in his book (Lax, 2002) that the DtN semigroup has an explicit representation.
This was a motivation for the authors in (Cherif, Arwadi, Emmamirad \& Sac Epee, 2014) and
(Emmamirad \& Shariftabbar, 2013) to introduce semi discrete implicit and explicit Euler's schemes to approximate
the DtN semigroup numerically. They also showed the convergence of these schemes using the Chernoff's product formula.

For more than twenty years, an impressive amount of work has been accomplished concerning a posteriori analysis and
mesh adaptivity for the finite element discretization of the elliptic problems. Their main results were to exhibit local error
indicators which can be computed explicitly as a function of the discrete solution and the data.

In (Arwadi, Dib \& Sayah, 2015), they studied the time dependent linear elliptic problem, and established optimal a priori
and a posteriori error estimates using the backward Euler's scheme in time and finite elements in space.

The Crank Nicolson scheme is one of the most popular time-stepping method; however optimal a priori and a posteriori
error estimates for elliptic equations have not yet been derived. The aim of this work is to provide optimal a priori and
a posteriori estimates and some numerical investigations.

The term ``a posteriori error estimator'' was first used by Ostrowski (Ostrowski, 1940). It is the quantity which bounds or
approximates the error, i.e. an upper bound of the error between an exact solution and a numerical one.

The error estimator is obtained as a sum of local indicators expressed on each element of the mesh (Mishra, 2012). We
have two types of computable error indicators, the first being linked to the time discretization and the second to the space
discretization.

We say that the a posteriori error estimates are optimal if we are able to bound each one of this indicators by the local
\newpage
error of the solution around the corresponding element. In this work, we propose a low cost discretization relying on the
Crank Nicolson's scheme in time combined with the finite elements in space, and then prove a priori and a posteriori error
estimates for the discrete problem.

The outline of the paper is as follows. In section 2, we give some notations that will be used in the sequel. Section 3 is
devoted to study the discrete problem and the uniqueness of its solution. In section 4, we study the a priori errors and
derive optimal estimates. Section 5 is devoted to study the a posteriori errors where two types of error indicators are
established.

\section{Notations}

In this section we will introduce some notations that will be used in the sequel.

\begin{itemize}[itemsep=0.67em,topsep=5pt]
\item $h$ the maximal diameter of the elements of all $\tau_{nh}$
\item $h_n$ the maximal diameter of the elements of $\tau_{nh}$ for each $n$
\item $h_\kappa$ the diameter of $\kappa$
\item $h_e$ the diameter of the edge $e$
\item $\Delta_\kappa$ the union of elements of $\tau_{nh}$ that intersect $\kappa$
\item $\Delta_e$ the union of elements of $\tau_{nh}$ that intersect the edge $e$
\item $\epsilon_\kappa$ the set of edges of $\kappa$ that are not on $\Gam$
\item $\epsilon_\kappa^m$ the set of edges of $\kappa$ that are on $\Gam$
\item $[.]_e$ the jump through $e$ for each edge $e$ in $\epsilon_\kappa$
\item $\psi_\kappa$ the bubble function which is equal to the product of the three barycentric coordinates associated
with the vertices of $\kappa$
\item $L_e$ the lifting operator defined on polynomials on $e$ vanishing on $\partial e$
\item $X_{nh}$ the finite dimensional space of functions such that their restrictions to any element $\kappa$ of
$\tau_{nh}$ belong to a space of polynomials of degree one. In other words,
\[
X_{nh}=\left\{v_n^h\in C^0(\overline\Om),v_n^h|_\kappa
isa f f ine\forall\kappa\in T_{nh}\right\}
\]
\item $I_h$ the approximation operator in $\mathcal L(H^2(\Om);X_{nh})$ such that for $m=0,1$,
\[
\forall v\in H^2(\Om),\qquad
\abs{I_h(v)-v}_{m,\Om}\le Ch^{2-m}\abs{v}_{2,\Om}
\]
\item We introduce the Sobolev spaces:
\[
H^m(\Om)=\left\{v\in L^2(\Om),\partial^\alpha v\in L^2(\Om),\ \forall\abs{\alpha}\le m\right\},
\]
equipped with the following semi-norm and norm:
\[
\abs{v}_{m,\Om}=\left\{\sum_{\abs{\alpha}=m}\int_\Om
\abs{\partial^\alpha v(x)}^2\dd x\right\}^{1/2}
\]
and
\[
\norm{v}{m,\Om}=\left\{\sum_{k\le m}\abs{v}_{k,\Om}^2\right\}^{1/2}
\]
\end{itemize}

\newpage

\section{The Discrete Problem}

Assume that $\Om$ is a polyhedron and $\gamma$ denotes a positive smooth bounded function. We introduce a partition
of the interval $[0,T]$ into sub intervals $[t_{n-1},t_n]$, $1\le n\le N$, such that
$0=t_0\le t_1\le...\le t_N=T$. Denote by $\tau_n$ the length of $[t_{n-1},t_n]$, by $\abs{\tau}$ the maximum
of the $\tau_n$, by $\tau$ the $N$-tuple $(\tau_1,...,\tau_N)$, and by $\sigma_\tau$ the regularity parameter
\[
\sigma_\tau=\max_{2\le n\le N}\frac{\tau_n}{\tau_{n-1}}.
\]

\begin{theorem}
If $u(t)\in H^2(\Om)$, then Problem~\eqref{eq:continuous-problem} is equivalent to the variational problem,
\begin{equation}
\begin{cases}
\text{Find }u(t)\in H^1(\Om),\text{ such that}\\
u(0,x)=u_0(x),&\text{on }\Gam\\
\displaystyle \int_\Om\gamma\nabla u\nabla v\dd x
+\int_\Gam\frac{\partial u}{\partial t}(t,s)v(t,s)\dd s=0,
&\forall v(t)\in H^1(\Om)
\end{cases}
\label{eq:variational-problem}
\end{equation}
\end{theorem}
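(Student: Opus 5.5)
The plan is to prove the two implications separately, for each fixed $t\in(0,T)$. The tools are Green's formula for the operator $\diver(\gamma\nabla\cdot)$ on the polyhedron $\Om$, which is legitimate because $u(t)\in H^2(\Om)$, and the density of $\mathcal D(\Om)$ in $L^2(\Om)$ together with the surjectivity of the trace map $H^1(\Om)\to H^{1/2}(\Gam)$. The key identity, for $u(t)\in H^2(\Om)$ and $v\in H^1(\Om)$, is
\[
\int_\Om \gamma\nabla u\cdot\nabla v\,\dd x=-\int_\Om \diver(\gamma\nabla u)\,v\,\dd x+\int_\Gam (\gamma n\cdot\nabla u)\,v\,\dd s .
\]
It holds since $\gamma\nabla u\in H^1(\Om)^2$ when $\gamma$ is smooth and bounded, so its normal trace lies in $L^2(\Gam)$. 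We also need $\partial u/\partial t(t,\cdot)|_\Gam\in L^2(\Gam)$, so that the boundary integral in \eqref{eq:variational-problem} makes sense; this is implicit in the hypothesis.

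\textbf{Strong implies weak.} Let $u$ solve \eqref{eq:continuous-problem}. Multiply $\diver(\gamma\nabla u)=0$ by $v\in H^1(\Om)$, integrate over $\Om$, and apply the identity above. The volume term vanishes, and $\gamma n\cdot\nabla u=-\partial u/\partial t$ on $\Gam$, which gives
\[
\int_\Om\gamma\nabla u\cdot\nabla v\,\dd x+\int_\Gam \frac{\partial u}{\partial t}v\,\dd s=0 .
\]
The initial condition carries over unchanged.

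\textbf{Weak implies strong.} First take $v\in\mathcal D(\Om)$. The boundary term drops, so $\int_\Om\gamma\nabla u\cdot\nabla v=0$, which means $\diver(\gamma\nabla u)=0$ in $\mathcal D'(\Om)$. Since $u(t)\in H^2$, this holds in $L^2(\Om)$, hence a.e. Now take a general $v\in H^1(\Om)$ and apply Green's formula. The volume term vanishes by the first step, leaving
\[
\int_\Gam\Big(\frac{\partial u}{\partial t}+\gamma n\cdot\nabla u\Big)v\,\dd s=0\qquad\forall v\in H^1(\Om).
\]
Traces of $H^1(\Om)$ functions fill $H^{1/2}(\Gam)$, which is dense in $L^2(\Gam)$. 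The bracket is in $L^2(\Gam)$, so it vanishes a.e. on $\Gam$, which is the dynamical boundary condition. The initial condition is again shared.

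\textbf{Main obstacle.} The delicate point is justifying Green's formula and the density argument on a polyhedral (Lipschitz) boundary. The normal $n$ is only defined a.e. on $\Gam$, so the normal trace $\gamma n\cdot\nabla u$ must be understood in $L^2(\Gam)$, face by face. This is where the hypothesis $u(t)\in H^2(\Om)$ is essential: without it, $\gamma n\cdot\nabla u$ would only lie in $H^{-1/2}(\Gam)$, and the boundary equation would hold only in that dual sense. I would handle it by citing the standard Green formula for $H^2$ functions on Lipschitz domains (Grisvard) and keeping track of the time regularity of $u$ needed for $\partial_t u\in L^2(\Gam)$.
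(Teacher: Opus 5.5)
Your proposal is correct and follows the same route as the paper. For the direct implication you use Green's formula together with the dynamical boundary condition. For the converse you take test functions in $\mathcal D(\Om)$ to recover $\diver(\gamma\nabla u)=0$, then general $v\in H^1(\Om)$ with Green's formula again to recover the boundary condition. Beyond the paper, you spell out details its proof leaves implicit: the normal trace lies in $L^2(\Gam)$, $H^{1/2}(\Gam)$ is dense in $L^2(\Gam)$, and $\partial_t u|_\Gam$ needs some regularity.
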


\begin{proof}
Let $u(t)$ be a solution of problem~\eqref{eq:continuous-problem}. Multiplying the first equation of
problem~\eqref{eq:continuous-problem} by $v(t)\in H^1(\Om)$, integrating over $\Om$, applying Green's formula and
using the second equation of problem~\eqref{eq:continuous-problem}, we obtain that $u$ is also a solution of
problem~\eqref{eq:variational-problem}. Conversely, if $u$ is a solution of problem~\eqref{eq:variational-problem},
we take $v(t)\in\mathcal D(\Om)$ to get the first line of problem~\eqref{eq:continuous-problem}. Then multiplying the
first equation of problem~\eqref{eq:continuous-problem} by $v(t)\in H^1(\Om)$, integrating over $\Om$, using the Green's
formula and comparing with problem~\eqref{eq:variational-problem}, we get the second line of
problem~\eqref{eq:continuous-problem}.
\end{proof}

\begin{proposition}
The solution of Problem~\eqref{eq:variational-problem} satisfies the following bound:
\[
\norm{u}{L^\infty(0,T,L^2(\Gam))}^2\le \norm{u_0}{L^2(\Gam)}
\]
\end{proposition}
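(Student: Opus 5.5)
The bound is an energy estimate: I will test the variational problem~\eqref{eq:variational-problem} with the solution itself. Fix $t\in(0,T)$ and take $v=u(t)\in H^1(\Om)$. This is admissible since $u(t)\in H^1(\Om)$. It gives
\[
\int_\Om\gamma\nabla u\cdot\nabla u\,\dd x+\int_\Gam\frac{\partial u}{\partial t}(t,s)\,u(t,s)\,\dd s=0 .
\]
The boundary term is $\frac12\frac{\dd}{\dd t}\norm{u(t)}{L^2(\Gam)}^2$. Since $\gamma$ is positive, the volume term is nonnegative, so
\[
\frac12\frac{\dd}{\dd t}\norm{u(t)}{L^2(\Gam)}^2=-\int_\Om\gamma\abs{\nabla u}^2\,\dd x\le 0 .
\]
Integrating from $0$ to $t$ and using $u(0)=u_0$ on $\Gam$ yields
\[
\norm{u(t)}{L^2(\Gam)}^2+2\int_0^t\int_\Om\gamma\abs{\nabla u}^2\,\dd x\,\dd s=\norm{u_0}{L^2(\Gam)}^2 .
\]
Taking the supremum over $t\in[0,T]$ gives $\norm{u}{L^\infty(0,T,L^2(\Gam))}^2\le\norm{u_0}{L^2(\Gam)}^2$.

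The statement as printed has $\norm{u_0}{L^2(\Gam)}$ unsquared on the right. That is dimensionally inconsistent and presumably a typo. I will prove the squared version above, which is equivalent to $\norm{u}{L^\infty(0,T,L^2(\Gam))}\le\norm{u_0}{L^2(\Gam)}$. If the unsquared form is wanted literally, it only follows under the extra assumption $\norm{u_0}{L^2(\Gam)}\le 1$, and I will remark on this.

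The main obstacle is justifying the identity $\int_\Gam\partial_t u\,u\,\dd s=\frac12\frac{\dd}{\dd t}\norm{u}{L^2(\Gam)}^2$. This needs enough time regularity of the trace, say $u|_\Gam\in H^1(0,T;L^2(\Gam))$, so that $t\mapsto\norm{u(t)}{L^2(\Gam)}^2$ is absolutely continuous. I will assume the regularity implicit in the formulation, namely $u(t)\in H^2(\Om)$ and $\partial_t u(t)\in L^2(\Gam)$, as in the theorem on equivalence of~\eqref{eq:continuous-problem} and~\eqref{eq:variational-problem}. If necessary, I will argue first for smooth solutions and then extend by density.
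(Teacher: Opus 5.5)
Your proof is correct. The paper states this proposition without any proof, so there is nothing of its own to compare against.

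Your energy argument is the standard one: test with $v=u(t)$, drop the nonnegative term $\int_\Omega\gamma|\nabla u|^2$, and integrate in time. It is also the mechanism the paper itself uses later with $w=e^{-t}(u-u_h)$ in the a posteriori upper bound.

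Your reading of the unsquared right-hand side as a typo is right, and it can be made sharp. Since $t\mapsto\|u(t)\|_{L^2(\Gamma)}$ is non-increasing and continuous at $t=0$, you have $\|u\|_{L^\infty(0,T;L^2(\Gamma))}=\|u_0\|_{L^2(\Gamma)}$ exactly. Hence the printed inequality holds if and only if $\|u_0\|_{L^2(\Gamma)}\le 1$. The constant solutions $u\equiv c$ with $|c|\,|\Gamma|^{1/2}>1$ violate it.

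On regularity, you need neither $H^2$ nor a density argument for the chain rule. Work in the natural weak setting $u\in L^2(0,T;H^1(\Omega))$, $\partial_t u\in L^2(0,T;H^{-1/2}(\Gamma))$, the space the paper itself uses for $\partial_t(u-u_h)$. The Lions--Magenes lemma for the triple $H^{1/2}(\Gamma)\subset L^2(\Gamma)\subset H^{-1/2}(\Gamma)$ then gives $u|_\Gamma\in C([0,T];L^2(\Gamma))$ and $\frac{d}{dt}\|u\|_{L^2(\Gamma)}^2=2\langle\partial_t u,u\rangle$ directly. This also makes the initial condition $u(0)=u_0$ on $\Gamma$ meaningful.
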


Now, the full discrete problem associated to the variational problem~\eqref{eq:variational-problem} is:
\begin{equation}
\begin{cases}
\text{Given }u_h^n\in X_{nh},\\
\forall v_h(t)\in X_{nh},\ u_h^n(t)\text{ is the solution of}\\
\displaystyle
\int_\Om\gamma\nabla u_h^{n+1}(x)\nabla v_h(t,x)\dd x
+2\int_\Gam\frac{u_h^{n+1}-u_h^n}{\tau_n}(x)v_h(t,x)\dd x
-\int_\Om\gamma\nabla u_h^n\nabla v_h\dd x=0
\end{cases}
\label{eq:discrete-problem}
\end{equation}

\begin{theorem}
The problem~\eqref{eq:discrete-problem} admits a unique solution in $X_{nh}$.
\end{theorem}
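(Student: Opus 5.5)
We will argue step by step in $n$: assuming $u_h^n$ is known, we show that the equation in problem~\eqref{eq:discrete-problem} determines $u_h^{n+1}\in X_{nh}$ uniquely. Multiplying by $\tau_n/2$ is unnecessary; it suffices to rewrite the scheme as a square linear system. Introduce on $X_{nh}\times X_{nh}$ the symmetric bilinear form
\[
a_n(w,v)=\int_\Om\gamma\nabla w\nabla v\,\dd x+\frac{2}{\tau_n}\int_\Gam w\,v\,\dd s ,
\]
and the linear functional
\[
\ell_n(v)=\int_\Om\gamma\nabla u_h^n\nabla v\,\dd x+\frac{2}{\tau_n}\int_\Gam u_h^n\,v\,\dd s .
\]
Then problem~\eqref{eq:discrete-problem} reads: find $u_h^{n+1}\in X_{nh}$ with $a_n(u_h^{n+1},v_h)=\ell_n(v_h)$ for all $v_h\in X_{nh}$. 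Note that $\ell_n$ is well defined and linear on $X_{nh}$ even if $u_h^n$ lives on a different mesh: $u_h^n$ is in $H^1(\Om)$ with a trace in $L^2(\Gam)$, and $\gamma$ is bounded.

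Since $X_{nh}$ is finite dimensional, we choose the nodal basis $(\varphi_i)$. The problem then becomes a square linear system $A U=F$ with $A_{ij}=a_n(\varphi_j,\varphi_i)$. For such a system, existence and uniqueness are both equivalent to injectivity of $A$. It therefore suffices to show that $a_n(w,v)=0$ for all $v\in X_{nh}$ forces $w=0$.

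For this we take $v=w$. This gives
\[
\int_\Om\gamma\abs{\nabla w}^2\dd x+\frac{2}{\tau_n}\norm{w}{L^2(\Gam)}^2=0 .
\]
Both terms are nonnegative, because $\gamma$ is positive. Hence $\nabla w=0$ in $\Om$ and $w=0$ on $\Gam$. Since $\Om$ is connected (a domain), $w$ is constant, and its trace vanishes, so $w\equiv0$. This is the only step needing care. Connectedness of $\Om$ is what excludes nonzero constants, and it is exactly the boundary mass term $\frac{2}{\tau_n}\int_\Gam$ that makes the form definite; the stiffness part alone only controls $w$ up to constants.

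Alternatively, and more quantitatively, we can invoke the Friedrichs-type inequality $\norm{v}{1,\Om}^2\le C(\abs{v}_{1,\Om}^2+\norm{v}{L^2(\Gam)}^2)$. This yields coercivity of $a_n$ on $H^1(\Om)$ with constant depending on $\min\gamma$ and $\tau_n$, and Lax--Milgram on the closed subspace $X_{nh}$ gives the result together with a stability bound. Induction on $n$, starting from the given $u_h^0$, concludes the proof.
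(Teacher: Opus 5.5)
Your proof is correct. It starts from the same reformulation as the paper: the paper's form $a(w,v)=\tau_n\int_\Om\gamma\nabla w\nabla v\,\dd x+2\int_\Gam wv\,\dd s$ is just $\tau_n$ times your $a_n$. The difference is in how the argument is closed.

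The paper invokes Lax--Milgram after noting only that $a$ and $L$ are (bi)linear and continuous. The coercivity of $a$, which is the actual content of the statement, is never verified there. Your main route bypasses Lax--Milgram: for a square system injectivity suffices, and you obtain it from the test $v=w$, with the boundary mass term eliminating the constants the stiffness term cannot see. Your alternative route is the paper's Lax--Milgram argument with the missing coercivity supplied by the Friedrichs-type inequality.

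The two routes trade off as follows. The injectivity argument needs only pointwise positivity of $\gamma$, $\tau_n>0$ and finite dimensionality. Even connectedness is dispensable, since every connected component of the bounded set $\Om$ has its boundary contained in $\Gam$. However, this argument gives no quantitative information. The coercivity route costs a Poincar\'e--Friedrichs inequality on $\Om$. In return it gives a coercivity constant depending only on $\min\gamma$, $\tau_n$ and $\Om$, and hence a per-step stability bound with constants independent of $h$.
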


\begin{proof}
We introduce the bilinear form ,
\[
a(u_h^{n+1},v_h)=\int_\Om \tau_n\gamma\nabla u_h^{n+1}\nabla v_h\dd x
+2\int_\Gam u_h^{n+1}v_h\dd\sigma
\]
and the linear form
\[
L(v_h)=\int_\Om \tau_n\gamma\nabla u_h^n\nabla v_h\dd x
+2\int_\Gam u_h^nv_h\dd\sigma
\]
Then the previous problem can be written as
\[
\forall v_h\in X_{nh},\qquad a(u_h^{n+1},v_h)=L(v_h)
\]
It is obvious that $a$ is bilinear and continuous in $X_{n+1,h}\times X_{n+1,h}$, and that $L$ is linear and continuous
in $X_{nh}$ and then, the Lax-Milgram theorem states the existence and the uniqueness of the solution. See
(Arwadi, Dib \& Sayah, 2015).
\end{proof}

\section{A Priori Error Estimate}

To get an a priori error estimate, we need the following Gronwall's lemma.

\begin{lemma}[Gronwall's lemma:]
Let $(a_n)_n\ge0$, $(b_n)_n\ge0$ and $(c_n)_n\ge0$ be three real positive sequences such that
$(c_n)_n\ge0$ is an increasing sequence. Suppose that
\[
a_0+b_0\le c_0
\]
there exists $\lambda>0$ such that:
\[
\forall n\ge0,\qquad a_n+b_n\le c_n+\lambda\sum_{m=0}^{n-1}a_m
\]
\newpage
then we have
\[
\forall n\ge0,\qquad a_n+b_n\le c_ne^{n\lambda}
\]
\end{lemma}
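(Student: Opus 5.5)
Since every term is nonnegative, I would bound $a_n$ alone by strong induction on $n$ and then feed that bound back into the hypothesis to recover $a_n+b_n$. The induction claim is
\[
a_n\le c_n(1+\lambda)^n \qquad\text{for all } n\ge0 .
\]
Once this holds, $(1+\lambda)^n\le e^{n\lambda}$ turns it into the exponential form.

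\textbf{Base case.} For $n=0$ the hypothesis $a_0+b_0\le c_0$ together with $b_0\ge0$ gives $a_0\le c_0$.

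\textbf{Inductive step.} Assume $a_m\le c_m(1+\lambda)^m$ for all $m\le n-1$. Because $(c_n)$ is increasing, $c_m\le c_n$ for $m\le n-1$, so the hypothesis gives
\[
a_n+b_n\le c_n+\lambda\sum_{m=0}^{n-1}c_n(1+\lambda)^m
= c_n\Bigl(1+\lambda\,\frac{(1+\lambda)^n-1}{\lambda}\Bigr)=c_n(1+\lambda)^n .
\]
Dropping $b_n\ge0$ closes the induction.

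This displayed line already bounds $a_n+b_n$, not just $a_n$, by $c_n(1+\lambda)^n\le c_ne^{n\lambda}$. So the statement follows directly from the same computation, with no separate final step.

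\textbf{Where care is needed.} The only delicate point is to carry the induction on $a_n$ alone, where $b_n\ge0$ lets us discard $b_n$, rather than on $a_n+b_n$. It is also essential to use monotonicity of $c_n$ to pull $c_n$ out of the sum, so that the geometric sum telescopes exactly.

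As a cross-check, one could set $S_n=\sum_{m<n}a_m$. Then $S_{n+1}-S_n=a_n\le c_n+\lambda S_n$, and a discrete integrating factor gives the same bound, but the induction above is shorter.
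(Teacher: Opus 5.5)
Your proof is correct. The strong induction $a_n\le c_n(1+\lambda)^n$ uses $c_m\le c_n$ for $m<n$ so that the geometric sum collapses exactly. Combined with $(1+\lambda)^n\le e^{n\lambda}$, multiplied by $c_n\ge 0$, it proves the lemma, and in the slightly sharper form $a_n+b_n\le c_n(1+\lambda)^n$.

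The paper states this lemma without any proof and only invokes it at the end of the a priori estimate. There is therefore no argument of the paper's to compare with, and your induction supplies the missing one.

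Two minor remarks:
\begin{itemize}
\item The separate hypothesis $a_0+b_0\le c_0$ is just the $n=0$ case of the main inequality (the sum is empty). Your base case is therefore already contained in your inductive display.
\item The point you flag as delicate is not really one. Inducting on $a_n+b_n$ works just as well, because $a_m\le a_m+b_m$ when $b_m\ge 0$.
\end{itemize}

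The hypotheses that genuinely matter are the two you name elsewhere. The first is $b_n\ge 0$, which lets you pass from $a_n+b_n$ to $a_n$. The second is monotonicity of $(c_n)$, without which the conclusion fails: take $c_0=a_0=M$ large, $b_0=b_1=0$, $c_1=\epsilon$ and $a_1=\epsilon+\lambda M$; then $a_1>\epsilon e^{\lambda}$.
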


\begin{theorem}
If $u\in L^\infty$ we have,
\[
\norm{u(t_{m+1})-u_h^{m+1}}{0,\Gam}^2
+k\abs{C_\gamma}\sum_{n=0}^{m}\norm{u(t_{n+1})-u_h^{n+1}}{1,\Om}^2
\le c(h^2+k^2)
\]
where $c$ is a constant independent of $h$ and $k$.
\end{theorem}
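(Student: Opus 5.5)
We will work with a uniform time step $k=\tau_n$ and assume that $u$ is as regular in time and space as needed, namely $u\in L^\infty(0,T;H^2(\Om))$ with $\partial_t^j u$ bounded on $\Gam$ for $j\le 3$. We split the error at $t_n$ in the usual way:
\[
u(t_n)-u_h^n=\big(u(t_n)-I_h u(t_n)\big)+\big(I_h u(t_n)-u_h^n\big)=:\eta^n+\xi^n,
\]
where $I_h$ is the interpolation operator of Section 2. The approximation property of $I_h$ gives $\norm{\eta^n}{1,\Om}\le Ch$. By the trace inequality, the same bound holds for $\norm{\eta^n}{0,\Gam}$. Hence it suffices to prove the estimate of the theorem for the discrete part $\xi^n\in X_{nh}$, with right-hand side $c(h^2+k^2)$.

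\textbf{Step 1 (error equation).} We write the identity satisfied by the exact solution in the same form as \eqref{eq:discrete-problem}. We take the variational problem \eqref{eq:variational-problem} at $t_{n+1}$ and at $t_n$ and combine them so that the boundary term becomes $2\int_\Gam \frac{u(t_{n+1})-u(t_n)}{k}v\,\dd s$, which mimics \eqref{eq:discrete-problem}. Whatever is left over defines a consistency residual $R^n(v)$. A Taylor expansion around $t_{n+1/2}$ should give
\[
\abs{R^n(v)}\le Ck^2\big(\norm{v}{0,\Gam}+\abs{v}_{1,\Om}\big).
\]
Subtracting \eqref{eq:discrete-problem} and inserting $u=\eta+I_hu$, we obtain for all $v_h\in X_{nh}$:
\[
\int_\Om\gamma\nabla\xi^{n+1}\nabla v_h-\int_\Om\gamma\nabla\xi^n\nabla v_h+\frac2k\int_\Gam(\xi^{n+1}-\xi^n)v_h=R^n(v_h)-\Big[\text{same expression with }\eta\text{ in place of }\xi\Big].
\]

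\textbf{Step 2 (energy estimate).} We choose $v_h=\xi^{n+1}$ and multiply by $k$. For the boundary term we use $2(a-b)a=a^2-b^2+(a-b)^2$, which yields
\[
\norm{\xi^{n+1}}{0,\Gam}^2-\norm{\xi^n}{0,\Gam}^2+\norm{\xi^{n+1}-\xi^n}{0,\Gam}^2 .
\]
Since $\gamma$ is positive, coercivity gives $\int_\Om\gamma\nabla\xi^{n+1}\nabla\xi^{n+1}\ge C_\gamma\abs{\xi^{n+1}}_{1,\Om}^2$.

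The delicate term is $-k\int_\Om\gamma\nabla\xi^n\nabla\xi^{n+1}$, which comes from the explicit part of the scheme as printed. We bound it by $\tfrac k2\|\gamma\|_\infty\big(\abs{\xi^n}_{1}^2+\abs{\xi^{n+1}}_{1}^2\big)$. After summation, the $\abs{\xi^n}_1^2$ pieces shift by one index and are absorbed by the implicit coercive term. This is the main obstacle: absorption requires $\|\gamma\|_\infty/C_\gamma$ close to $1$. Otherwise one must exploit the boundary term instead. In that case I would first try the Friedrichs-type inequality
\[
\norm{v}{1,\Om}^2\le C\big(\abs{v}_{1,\Om}^2+\norm{v}{0,\Gam}^2\big)
\]
together with the Gronwall lemma, to put the excess on $\sum_m\norm{\xi^m}{0,\Gam}^2$.

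The right-hand side terms are handled with Young's inequality and the trace inequality:
\[
k\abs{R^n(\xi^{n+1})}\le Ck^5+\varepsilon k\abs{\xi^{n+1}}_{1}^2+k\norm{\xi^{n+1}}{0,\Gam}^2 .
\]
The $\eta$-terms are bounded by $Ckh^2$ plus absorbable pieces. For $\frac2k(\eta^{n+1}-\eta^n)$ we use $\norm{\eta^{n+1}-\eta^n}{0,\Gam}\le Ch\int_{t_n}^{t_{n+1}}\abs{\partial_t u}_2$, so the factor $1/k$ cancels.

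\textbf{Step 3 (summation and Gronwall).} We sum over $n=0,\dots,m$ and take $\xi^0=0$, i.e. $u_h^0=I_hu_0$. With
\[
a_n=\norm{\xi^{n}}{0,\Gam}^2,\qquad b_n=kC_\gamma\sum_{j\le n}\abs{\xi^{j}}_{1,\Om}^2,\qquad c_n=C(h^2+k^4)+C\,nk\,(h^2+k^4),
\]
we obtain $a_{m+1}+b_{m+1}\le c_{m+1}+\lambda k\sum_{j\le m}a_j$. Gronwall's lemma with $\lambda k\cdot n\le \lambda T$ then gives
\[
a_{m+1}+b_{m+1}\le C e^{\lambda T}(h^2+k^4)\le c(h^2+k^2).
\]
Finally, the Friedrichs inequality converts $\abs{\cdot}_{1,\Om}$ to $\norm{\cdot}{1,\Om}$, since the boundary part is already controlled by the $a_j$. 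The triangle inequality with the $\eta$ bounds then gives the claimed estimate for $\norm{u(t_{m+1})-u_h^{m+1}}{0,\Gam}^2+k\abs{C_\gamma}\sum_{n=0}^{m}\norm{u(t_{n+1})-u_h^{n+1}}{1,\Om}^2$.
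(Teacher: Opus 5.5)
Your outline is the paper's own route: splitting with $I_h$, testing with $\xi^{n+1}=a_{n+1}$, the identity $2(a-b)a=a^2-b^2+(a-b)^2$, then Young and Gronwall. It fails where the paper's proof fails. The consistency claim in Step 1 is false for the scheme as printed, and no argument can repair it.

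With the minus sign, \eqref{eq:discrete-problem} reads $\int_\Om\gamma\nabla(u_h^{n+1}-u_h^n)\nabla v_h\,\dd x+\frac{2}{k}\int_\Gam(u_h^{n+1}-u_h^n)v_h\,\dd s=0$. Taking $v_h=u_h^{n+1}-u_h^n$ gives $u_h^{n+1}=u_h^n$; in the paper's Lax--Milgram proof, $L(v_h)$ is just the bilinear form evaluated at $(u_h^n,v_h)$. Correspondingly your residual is
\begin{align*}
R^n(v)&=\int_\Om\gamma\nabla\bigl(u(t_{n+1})-u(t_n)\bigr)\nabla v\,\dd x+\frac{2}{k}\int_\Gam\bigl(u(t_{n+1})-u(t_n)\bigr)v\,\dd s\\
&=2\int_\Gam\frac{\partial u}{\partial t}(t_{n+1/2})\,v\,\dd s+O(k)\bigl(\abs{v}_{1,\Om}+\norm{v}{0,\Gam}\bigr),
\end{align*}
which is $O(1)$, not $O(k^2)$. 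No Taylor expansion removes the leading term, and after multiplying by $k$ and summing, Young's inequality can only bound it by a quantity of order $T$.

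In fact the estimate itself is false for this scheme. With your choice $u_h^0=I_hu(0)$ one has $u_h^{m+1}=I_hu(0)$. So at a fixed time $t=t_{m+1}>0$ the left-hand side is at least $\norm{u(t)-I_hu(0)}{0,\Gam}^2$. As $h\to0$ this tends to $\norm{u(t)-u_0}{0,\Gam}^2>0$ for every non-constant $u_0$, since the energy identity gives $\norm{u(t)}{0,\Gam}<\norm{u_0}{0,\Gam}$. The paper's proof hides exactly this in its term $T_5=\int_{t_n}^{t_{n+1}}\int_\Om\gamma\nabla u(t_n)\nabla v_h^{n+1}\,\dd x\,\dd t$. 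Its Young bound $\frac{1}{2\epsilon_4}\abs{C_\gamma}^2k\norm{u}{L^\infty(0,T,H^1(\Om))}^2$ sums to $O(1)$, and it is entered with a minus sign when the bounds are collected.

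Step 2 has a second, independent gap. Writing $A(v,w)=\int_\Om\gamma\nabla v\nabla w\,\dd x$, the $\gamma$-terms produced by $v_h=\xi^{n+1}$ are exactly
\[
A(\xi^{n+1},\xi^{n+1})-A(\xi^n,\xi^{n+1})=\tfrac12A(\xi^{n+1},\xi^{n+1})-\tfrac12A(\xi^n,\xi^n)+\tfrac12A(\xi^{n+1}-\xi^n,\xi^{n+1}-\xi^n).
\]
After multiplying by $k$ and summing, you control only $\frac k2A(\xi^{m+1},\xi^{m+1})$ and the increments, never $kC_\gamma\sum_j\abs{\xi^j}_{1,\Om}^2$.
\begin{itemize}
\item Your absorption would need $\norm{\gamma}{L^\infty(\Om)}<C_\gamma$, which is impossible. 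At equality the terms with $j\le m$ cancel rather than being absorbed.
\item The Friedrichs inequality points the wrong way. It bounds $\norm{v}{1,\Om}$ by $\abs{v}_{1,\Om}$ plus the trace, whereas you would need $\abs{\xi^j}_{1,\Om}\le C\norm{\xi^j}{0,\Gam}$, which is false.
\end{itemize}
The paper's Gronwall step has the same defect: its $B_m$ is built from $\frac34\abs{a_{n+1}}^2-\abs{a_n}\abs{a_{n+1}}$, which need not be nonnegative.

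The estimate does become provable for the genuine Crank--Nicolson scheme with $+\int_\Om\gamma\nabla u_h^n\nabla v_h$, which is what the factor $2$ in \eqref{eq:discrete-problem} matches. There your Taylor argument does give $\abs{R^n(v)}\le Ck^2\norm{v}{0,\Gam}$, but you need other test functions.
\begin{itemize}
\item Testing with $v_h=\xi^{n+1}+\xi^n$ gives $\max_n\norm{\xi^n}{0,\Gam}^2\le C(h^2+k^4)$ via Gronwall.
\item Testing with $v_h=\xi^{n+1}-\xi^n$ turns the $\gamma$-terms into the telescoping difference $A(\xi^{n+1},\xi^{n+1})-A(\xi^n,\xi^n)$. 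The term $\frac2k\norm{\xi^{n+1}-\xi^n}{0,\Gam}^2$ absorbs the consistency and boundary $\eta$-terms, and a summation by parts handles $A(\eta^{n+1}+\eta^n,\xi^{n+1}-\xi^n)$.
\end{itemize}
This yields $\max_n\abs{\xi^n}_{1,\Om}^2\le C(h^2+k^4)$, from which the $k\sum_n$ term of the statement follows at once.
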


\begin{proof}
Denote by $k$ the time step, $h$ the parameter of the mesh and $X_h$ the discrete space. Suppose that $\tau_n$ and
$\tau_{nh}$ are constants during time iterations. Consider the equation,
\[
\int_\Om\nabla u(t,x)\nabla v(t,x)\dd x
+2\int_\Gam\frac{\partial u}{\partial t}(t,s)v(t,s)\dd s=0,
\qquad \forall v(t)\in H^1(\Om)
\]
For $t\in(t_n,t_{n+1})$ take $v=v_h^{n+1}$, integrate in time
\begin{equation}
\int_{t_n}^{t_{n+1}}\int_\Om\nabla u(t,x)\nabla v_h^{n+1}(t,x)\dd x
+2\int_{t_n}^{t_{n+1}}\int_\Gam
\frac{\partial u}{\partial t}(t,s)v_h^{n+1}(t,s)\dd s=0
\label{eq:time-integrated-continuous}
\end{equation}
The discrete variation formulation for the Crank Nicolson scheme taken in the time step $n+1$, is
\[
\int_\Om\gamma\nabla u_h^{n+1}\nabla v_h^{n+1}\dd x
+2\int_\Gam\frac{u_h^{n+1}-u_h^n}{\tau_n}v_h^{n+1}\dd\sigma
-\int_\Om\gamma\nabla u_h^n\nabla v_h^{n+1}\dd x=0
\]
Integrating in time between $t_n$ and $t_{n+1}$ we get,
\begin{equation}
\int_{t_n}^{t_{n+1}}\int_\Om\gamma\nabla u_h^{n+1}\nabla v_h^{n+1}\dd x
+2\int_{t_n}^{t_{n+1}}\int_\Gam
\frac{u_h^{n+1}-u_h^n}{\tau_n}v_h^{n+1}\dd\sigma
-\int_{t_n}^{t_{n+1}}\int_\Om\gamma\nabla u_h^n\nabla v_h^{n+1}\dd x=0
\label{eq:time-integrated-discrete}
\end{equation}
Taking the difference between \eqref{eq:time-integrated-continuous} and
\eqref{eq:time-integrated-discrete} we get,
\[
\int_{t_n}^{t_{n+1}}\int_\Om
\gamma\nabla(u-u_h^{n+1}+u_h^n)\nabla v_h^{n+1}\dd x
+2\int_\Gam\bigl[(u(t_{n+1})-u(t_n))-(u_h^{n+1}-u_h^n)\bigr]v_h^{n+1}\dd\sigma=0
\]
Now inserting $\pm\nabla(I_h(u(t_{n+1})))$, $\nabla(I_h(u(t_n)))$,
$\nabla(u(t_{n+1}))$ and $\nabla(u(t_n))$ into the first term, and
$\pm I_h(u(t_{n+1}))$ and $I_h(u(t_n))$ into the second term, we obtain
\begin{align*}
&-\int_{t_n}^{t_{n+1}}\int_\Om
\gamma\nabla(u(t_{n+1})-u(t_n))\nabla v_h^{n+1}\dd x\dd t
-\int_{t_n}^{t_{n+1}}\int_\Om
\gamma\nabla(I_h(u(t_{n+1}))-u(t_{n+1}))\nabla v_h^{n+1}\dd x\dd t\\
&+\int_{t_n}^{t_{n+1}}\int_\Om
\gamma\nabla(I_h(u(t_{n+1}))-u_h^{n+1})\nabla v_h^{n+1}\dd x\dd t
-\int_{t_n}^{t_{n+1}}\int_\Om
\gamma\nabla(I_h(u(t_n))-u_h^n)\nabla v_h^{n+1}\dd x\dd t\\
&+\int_{t_n}^{t_{n+1}}\int_\Om
\gamma\nabla(I_h(u(t_n))-u(t_n))\nabla v_h^{n+1}\dd x\dd t
+\int_{t_n}^{t_{n+1}}\int_\Om
\gamma\nabla(u(t_n))\nabla v_h^{n+1}\dd x\dd t\\
&+2\int_\Gam(a_{n+1}-a_n)(s)v_h^{n+1}\dd s
-2\int_\Gam(I_h(u(t_{n+1}))-u(t_{n+1}))
 -(I_h(u(t_n))-u(t_n))v_h^{n+1}\dd s=0
\end{align*}
where $a_{n+1}=I_h(u(t_{n+1}))-u_h^{n+1}$ and $a_n=I_h(u(t_n))-u_h^n$.
Now we will bound the third and fourth terms of the previous equation. Choosing $v_h^{n+1}=a_{n+1}$
\begin{align*}
\int_{t_n}^{t_{n+1}}\int_\Om
\gamma\nabla(I_h(u(t_{n+1}))-u_h^{n+1})\nabla v_h^{n+1}\dd x\dd t
&=\int_{t_n}^{t_{n+1}}\int_\Om\gamma\nabla a_{n+1}\nabla v_h^{n+1}\dd x\dd t\\
&=\int_{t_n}^{t_{n+1}}\int_\Om\gamma\nabla a_{n+1}^2\dd x\dd t\\
&\le k\abs{C_\gamma}\abs{a_{n+1}}_{1,\Om}^2
\end{align*}
\newpage
\begin{align*}
\int_{t_n}^{t_{n+1}}\int_\Om
\gamma\nabla(I_h(u(t_n))-u_h^n)\nabla v_h^{n+1}\dd x\dd t
&=\int_{t_n}^{t_{n+1}}\int_\Om\gamma\nabla a_n\nabla a_{n+1}\dd x\dd t\\
&\le k\abs{C_\gamma}\abs{a_n}_{1,\Om}\abs{a_{n+1}}_{1,\Om}
\end{align*}
we obtain
\begin{align*}
&2\int_\Gam(a_{n+1}-a_n)(s)v_h^{n+1}\dd s
+k\abs{C_\gamma}\abs{a_{n+1}}_{1,\Om}^2
-k\abs{C_\gamma}\abs{a_n}_{1,\Om}\abs{a_{n+1}}_{1,\Om}\\
={}&2\int_\Gam(I_h(u(t_{n+1}))-u(t_{n+1}))
 -(I_h(u(t_n))-u(t_n))v_h^{n+1}\dd s\\
&+\int_{t_n}^{t_{n+1}}\int_\Om
\gamma\nabla(u(t_{n+1})-u(t))\nabla v_h^{n+1}\dd x\dd t\\
&+\int_{t_n}^{t_{n+1}}\int_\Om
\gamma\nabla(I_h(u(t_{n+1}))-u(t_{n+1}))\nabla v_h^{n+1}\dd x\dd t\\
&-\int_{t_n}^{t_{n+1}}\int_\Om
\gamma\nabla(I_h(u(t_n))-u(t_n))\nabla v_h^{n+1}\dd x\dd t\\
&-\int_{t_n}^{t_{n+1}}\int_\Om\gamma\nabla(u(t_n))\nabla v_h^{n+1}\dd x\dd t
\end{align*}
We denote by $T_1$ the first term of the left hand side, $T_2$ and $T_3$ the first and second terms of the
right hand side, $T_4$ the third and fourth terms, and $T_5$ the last term of the equation.

The term $T_1$ can be expressed as
\begin{align*}
T_1&=2\int_\Gam(a_{n+1}-a_n)(s)v_h^{n+1}\dd s
=2\int_\Gam(a_{n+1}^2-a_na_{n+1})\dd s\\
&=\int_\Gam a_{n+1}^2\dd s-\int_\Gam a_n^2\dd s
+\int_\Gam(a_{n+1}-a_n)^2\dd s
\end{align*}

The term $T_2$ can be bounded as
\begin{align*}
T_2
&=2\int_\Gam(I_h(u(t_{n+1}))-u(t_{n+1}))
 -(I_h(u(t_n))-u(t_n))v_h^{n+1}\dd s\\
&=2\int_\Gam(g(t_{n+1})-g(t_n))a_{n+1}\dd s\\
&=2\int_{t_n}^{t_{n+1}}\int_\Om g'(\tau,s)a_{n+1}\dd s\dd\tau\\
&\le2\int_{t_n}^{t_{n+1}}\norm{g'(\tau)}{0,\Gam}\norm{a_{n+1}}{0,\Gam}\dd\tau
\end{align*}
But
\[
\norm{g'(\tau)}{0,\Gam}\le c\norm{g'(\tau)}{1,\Om}
\le\widetilde c h\norm{u'(\tau)}{2,\Om}
\le c_1h\norm{u'(\tau)}{L^\infty(0,T,H^2(\Om))}
\]
then,
\[
T_2\le c_1hk\norm{u'(\tau)}{L^\infty(0,T,H^2(\Om))}\norm{a_{n+1}}{0,\Gam}
\]
Using the inequality $ab\le \frac{1}{2\epsilon_1}a^2+\frac{\epsilon_1}{2}b^2$, with
$a=c_1h\sqrt{k}\norm{u'}{L^\infty}$ and
$b=\sqrt{k}\norm{a_{n+1}}{0,\Gam}$, we get
\[
T_2\le \frac{1}{2\epsilon_1}c_1^2h^2k
\norm{u'(\tau)}{L^\infty(0,T,H^2(\Om))}^2
+\frac{\epsilon_1}{2}k\norm{a_{n+1}}{0,\Gam}^2
\]

\newpage

The term $T_3$ can be bounded as
\begin{align*}
T_3
&=\int_{t_n}^{t_{n+1}}\int_\Om
\gamma\nabla(u(t_{n+1})-u(t))\nabla v_h^{n+1}\dd x\dd t\\
&=\int_{t_n}^{t_{n+1}}\int_t^{t_{n+1}}\int_\Om
\gamma\nabla(u'(\tau,x))\nabla a_{n+1}\dd x\dd\tau\dd t\\
&\le\int_{t_n}^{t_{n+1}}\int_t^{t_{n+1}}
\gamma\norm{u'(\tau)}{1,\Om}\norm{a_{n+1}}{1,\Om}\dd\tau\dd t\\
&\le k^2\abs{C_\gamma}\norm{u'}{L^\infty(0,T,H^1(\Om))}\abs{a_{n+1}}_{1,\Om}
\end{align*}
Using the inequality $ab\le \frac{1}{2\epsilon_2}a^2+\frac{\epsilon_2}{2}b^2$, with
$a=k^{3/2}C_\gamma\norm{u'}{L^\infty}$ and $b=\sqrt{k}\norm{a_{n+1}}{1,\Om}$, we get
\[
T_3\le\frac{1}{2\epsilon_2}k^3C_\gamma^2
\norm{u'}{L^\infty(0,T,H^1(\Om))}^2
+\frac{\epsilon_2}{2}k\abs{a_{n+1}}_{1,\Om}^2
\]

Now the term $T_4$ can be bounded as
\begin{align*}
T_4
&=\int_{t_n}^{t_{n+1}}\int_\Om
\gamma\nabla(I_h(u(t_{n+1}))-u(t_{n+1}))\nabla v_h^{n+1}\dd x\dd t\\
&\quad-\int_{t_n}^{t_{n+1}}\int_\Om
\gamma\nabla(I_h(u(t_n))-u(t_n))\nabla a_{n+1}\dd x\dd t\\
&=\int_{t_n}^{t_{n+1}}\int_\Om
\gamma\nabla(g(t_{n+1})-g(t_n))\nabla a_{n+1}\dd x\dd t\\
&=\int_{t_n}^{t_{n+1}}\int_t^{t_{n+1}}\int_\Om
\gamma\nabla(g'(\tau,x))\nabla a_{n+1}\dd x\dd\tau\dd t\\
&\le k^2\abs{C_\gamma}\norm{g'(\tau)}{1,\Om}\abs{a_{n+1}}_{1,\Om}\\
&\le k^2\abs{C_\gamma}ch\norm{u'(\tau)}{2,\Om}\abs{a_{n+1}}_{1,\Om}\\
&\le k^2ch\abs{C_\gamma}\norm{u'}{L^\infty(0,T,H^2(\Om))}\abs{a_{n+1}}_{1,\Om}
\end{align*}
Using the inequality $ab\le \frac{1}{2\epsilon_3}a^2+\frac{\epsilon_3}{2}b^2$, with
$a=\abs{C_\gamma}chk^{3/2}\norm{u'}{L^\infty}$ and
$b=\sqrt{k}\norm{a_{n+1}}{1,\Om}$, we get
\[
T_4\le\frac{1}{2\epsilon_3}\abs{C_\gamma}^2c^2h^2k^3
\norm{u'}{L^\infty(0,T,H^2(\Om))}^2
+\frac{\epsilon_3}{2}k\abs{a_{n+1}}_{1,\Om}^2
\]

Finally, the term $T_5$ can be bounded as
\begin{align*}
T_5&=\int_{t_n}^{t_{n+1}}\int_\Om\gamma\nabla(u(t_n))\nabla v_h^{n+1}\dd x\dd t\\
&\le k\abs{C_\gamma}\norm{u}{1,\Om}\abs{a_{n+1}}_{1,\Om}\\
&\le k\abs{C_\gamma}\norm{u}{L^\infty(0,T,H^1(\Om))}\abs{a_{n+1}}_{1,\Om}
\end{align*}
Using the inequality $ab\le \frac{1}{2\epsilon_4}a^2+\frac{\epsilon_4}{2}b^2$, with
$a=\abs{C_\gamma}k^{1/2}\norm{u}{L^\infty}$ and
$b=k^{1/2}\norm{a_{n+1}}{1,\Om}$, we get
\[
T_5\le\frac{1}{2\epsilon_4}\abs{C_\gamma}^2k
\norm{u}{L^\infty(0,T,H^1(\Om))}^2
+\frac{\epsilon_4}{2}k\norm{a_{n+1}}{1,\Om}^2
\]

Now using all the previous bounds, we obtain
\begin{align*}
&\int_\Gam a_{n+1}^2\dd s-\int_\Gam a_n^2\dd s
+\int_\Gam(a_{n+1}-a_n)^2\dd s
+k\abs{C_\gamma}\abs{a_{n+1}}_{1,\Om}^2
-k\abs{C_\gamma}\abs{a_n}_{1,\Om}\abs{a_{n+1}}_{1,\Om}\displaybreak[4]\\
\le{}&\frac{1}{2\epsilon_1}c_1^2h^2k
\norm{u'(\tau)}{L^\infty(0,T,H^2(\Om))}^2
+\frac{\epsilon_1}{2}k\norm{a_{n+1}}{0,\Gam}^2\\
&+\frac{1}{2\epsilon_2}k^3\abs{\gamma}^2
\norm{u'}{L^\infty(0,T,H^1(\Om))}^2
+\frac{\epsilon_2}{2}k\abs{a_{n+1}}_{1,\Om}^2\\
&+\frac{1}{2\epsilon_3}\abs{C_\gamma}^2c^2h^2k^3
\norm{u'}{L^\infty(0,T,H^2(\Om))}^2
+\frac{\epsilon_3}{2}k\abs{a_{n+1}}_{1,\Om}^2\\
&-\frac{1}{2\epsilon_4}\abs{C_\gamma}^2k
\norm{u}{L^\infty(0,T,H^1(\Om))}^2
-\frac{\epsilon_4}{2}k\norm{a_{n+1}}{1,\Om}^2
\end{align*}
Choosing $\epsilon_1=\frac{1}{8T}$, $\epsilon_2=\frac{\abs{C_\gamma}}{2}$,
$\epsilon_3=\frac{\abs{C_\gamma}}{2}$ and $\epsilon_4=\frac{\abs{C_\gamma}}{2}$, we get
\begin{align*}
&\int_\Gam a_{n+1}^2\dd s-\int_\Gam a_n^2\dd s
+\int_\Gam(a_{n+1}-a_n)^2\dd s
+\frac{3}{4}k\abs{C_\gamma}\abs{a_{n+1}}_{1,\Om}^2
-k\abs{C_\gamma}\abs{a_n}_{1,\Om}\abs{a_{n+1}}_{1,\Om}\\
&\hspace{35mm}\le ck(h^2+k^2)+\frac{k}{16T}\abs{a_{n+1}}_{0,\Gam}
\end{align*}
Taking sum from $n=0,1,\ldots,m$ and replacing $A_m=4\norm{a_{m+1}}{0,\Gam}^2$,
$C_m=4c'(h^2+k^2)$ and
$B_m=4k\abs{\gamma}\sum_{n=0}^{m}
\left(\frac{3}{4}\abs{a_{n+1}^2}-\abs{a_n}\abs{a_{n+1}}\right)$
with $\frac{k}{16T}\le\frac14$,
we get
\[
A_m+B_m\le C_m+\lambda\sum_{n=0}^{m-1}A_n
\]
Using Gronwall's Lemma and the properties of $I_h$ we obtain the result.
\end{proof}

\section{A Posteriori Error Estimate}

In this section a posteriori error estimates between the exact solution and the numerical one will be established.

\begin{proposition}[(Verfurth, 1996)]
Denote by $P_r(\kappa)$ the space of polynomials of degree less than $r$ on $\kappa$, we have $\forall v\in P_r(\kappa)$
\[
c\norm{v}{0,\kappa}\le \norm{v\psi_\kappa^{1/2}}{0,\kappa}
\le c'\norm{v}{0,\kappa}
\qquad
\abs{v}_{1,\kappa}\le ch_\kappa^{-1}\norm{v}{0,\kappa}
\]
\end{proposition}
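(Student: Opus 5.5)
The plan is to reduce both estimates to statements on the reference triangle $\hat\kappa$ and then transport them to $\kappa$ by the affine map $F_\kappa(\hat x)=B_\kappa\hat x+b_\kappa$. The argument rests on two facts. First, $P_r(\hat\kappa)$ is finite dimensional, so any two norms on it are equivalent. Second, the bubble function is invariant under this map: $\psi_\kappa\circ F_\kappa=\hat\psi$, where $\hat\psi=\hat\lambda_1\hat\lambda_2\hat\lambda_3$, because barycentric coordinates are preserved by affine maps. For $v\in P_r(\kappa)$ we write $\hat v=v\circ F_\kappa\in P_r(\hat\kappa)$.

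\textbf{First (two-sided) estimate.} The upper bound is immediate. Since $0\le\psi_\kappa\le 1$ on $\kappa$ (in fact $\psi_\kappa\le 1/27$), we have $\norm{v\psi_\kappa^{1/2}}{0,\kappa}\le\norm{v}{0,\kappa}$, so $c'=1$ works.

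For the lower bound on $\hat\kappa$, consider the map $\hat v\mapsto\bigl(\int_{\hat\kappa}\hat\psi\,\hat v^2\,\dd\hat x\bigr)^{1/2}$. It is a seminorm on $P_r(\hat\kappa)$. It is in fact a norm: if it vanishes, then $\hat v=0$ on the interior of $\hat\kappa$ because $\hat\psi>0$ there, and a polynomial vanishing on an open set is identically zero. Equivalence of norms in finite dimensions then gives $\hat c\norm{\hat v}{0,\hat\kappa}\le\norm{\hat v\hat\psi^{1/2}}{0,\hat\kappa}$, with $\hat c$ depending only on $r$.

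Changing variables, both sides scale by the same factor $\abs{\det B_\kappa}^{1/2}$. Hence $c=\hat c$, which is independent of $\kappa$, and no mesh regularity is needed for this part.

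\textbf{Inverse inequality.} On $\hat\kappa$, the quantity $\abs{\hat v}_{1,\hat\kappa}$ is a seminorm on $P_r(\hat\kappa)$ and $\norm{\hat v}{0,\hat\kappa}$ is a norm. Finite dimensionality therefore gives $\abs{\hat v}_{1,\hat\kappa}\le\hat C\norm{\hat v}{0,\hat\kappa}$.

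To transport this to $\kappa$, use $\nabla v=B_\kappa^{-T}\hat\nabla\hat v$ together with the change of variables. This yields
\[
\abs{v}_{1,\kappa}\le\norm{B_\kappa^{-1}}{}\abs{\det B_\kappa}^{1/2}\abs{\hat v}_{1,\hat\kappa},
\qquad
\norm{\hat v}{0,\hat\kappa}=\abs{\det B_\kappa}^{-1/2}\norm{v}{0,\kappa}.
\]
Combining the two, $\abs{v}_{1,\kappa}\le\hat C\norm{B_\kappa^{-1}}{}\norm{v}{0,\kappa}$.

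\textbf{Main obstacle.} The remaining step is to show $\norm{B_\kappa^{-1}}{}\le\hat h/\rho_\kappa$, where $\rho_\kappa$ is the inradius of $\kappa$ and $\hat h$ the diameter of $\hat\kappa$, and then to get $h_\kappa/\rho_\kappa\le\sigma$. The second bound is exactly where a regularity assumption on the family $\tau_{nh}$ is required. The paper never states this assumption, so I would make it explicit: the triangulations are shape-regular (a standard hypothesis in Verf\"urth's framework). With it, $\norm{B_\kappa^{-1}}{}\le Ch_\kappa^{-1}$ and the inverse inequality follows with $c$ depending only on $r$ and $\sigma$.
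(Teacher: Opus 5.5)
The paper gives no proof of this proposition; it quotes it from Verf\"urth (1996). Your argument is the standard proof from that source and it is correct: you pull back to the reference triangle, use $\psi_\kappa\circ F_\kappa=\hat\psi$ and equivalence of norms on the finite-dimensional space $P_r(\hat\kappa)$, and then scale to get $\abs{v}_{1,\kappa}\le \hat C\,\|B_\kappa^{-1}\|\,\norm{v}{0,\kappa}$.

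You are also right about the hypotheses. The bubble-function equivalence holds with constants depending only on $r$. The inverse estimate, by contrast, needs a shape-regularity bound $h_\kappa/\rho_\kappa\le\sigma$ on $\tau_{nh}$, which the paper never states. The remaining step follows because every vector of length $\rho_\kappa$ is a difference of two points of $\kappa$. This gives $\|B_\kappa^{-1}\|\le\hat h/\rho_\kappa$.
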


\begin{proposition}[(Verfurth, 1996)]
Denote by $P_r(e)$ the space of polynomials of degree less than $r$ on $e$, we have $\forall v\in P_r(e)$,
\[
c\norm{v}{0,e}\le \norm{v\psi_e^{1/2}}{0,e}\le c'\norm{v}{0,e}
\]
and for all polynomials in $P_r(e)$ vanishing on $\partial e$,
\[
\norm{L_ev}{0,\kappa}+h_e\abs{L_ev}_{1,\kappa}
\le ch_e^{1/2}\norm{v}{0,e}
\]
\end{proposition}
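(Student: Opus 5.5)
The statement to prove is the edge proposition attributed to Verfürth: the two-sided bound for $\norm{v\psi_e^{1/2}}{0,e}$, and the $L^2$ and $H^1$ bounds on the lifting $L_e v$. The approach is the standard one: a scaling argument to a reference configuration, combined with equivalence of norms on finite-dimensional spaces.

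\textbf{Reference configuration.} Let $\hat e=[0,1]$ be the reference edge and $\hat\kappa$ the reference triangle having $\hat e$ as an edge. Let $F_\kappa:\hat\kappa\to\kappa$ be the affine map sending $\hat e$ onto $e$, and define $\psi_e$ as the product of the two barycentric coordinates of the endpoints of $e$. Then $\psi_e\circ F_\kappa=\hat\psi_{\hat e}$. For $v\in P_r(e)$, write $\hat v=v\circ F_\kappa|_{\hat e}\in P_r(\hat e)$.

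\textbf{First inequality.} The upper bound is immediate, with $c'=1$, because $0\le\psi_e\le 1$. For the lower bound, both
\[
\hat v\mapsto\norm{\hat v\hat\psi_{\hat e}^{1/2}}{0,\hat e}
\qquad\text{and}\qquad
\hat v\mapsto\norm{\hat v}{0,\hat e}
\]
are norms on the finite-dimensional space $P_r(\hat e)$. The first is positive definite because $\hat\psi_{\hat e}>0$ on the interior of $\hat e$ and a nonzero polynomial cannot vanish on an open set. The two norms are therefore equivalent. Since $\dd s=h_e\,\dd\hat s$, both sides scale by the same factor $h_e^{1/2}$, so the constants do not depend on $e$.

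\textbf{Lifting.} Next I fix the construction of $L_e$. On the reference element, extend $\hat v$ constantly along the direction transversal to $\hat e$ (for instance, through the barycentric coordinate along $\hat e$), and define $\hat L\hat v$ as this extension times $\hat\psi_{\hat e}$ extended to $\hat\kappa$. Then set $L_e v=(\hat L\hat v)\circ F_\kappa^{-1}$, extended by the analogous construction on the neighbouring element, so that $L_e v$ is supported on $\Delta_e$. The map $\hat L$ is linear from $P_r(\hat e)$ into a finite-dimensional polynomial space on $\hat\kappa$. Using once more that all norms on a finite-dimensional space are equivalent,
\[
\norm{\hat L\hat v}{0,\hat\kappa}+\abs{\hat L\hat v}_{1,\hat\kappa}\le\hat c\norm{\hat v}{0,\hat e}.
\]
The hypothesis that $v$ vanishes on $\partial e$ is what makes this extension, after multiplication by the bubble, vanish on the other edges of $\kappa$. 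This keeps the lifting continuous across elements, so it can be used later as a test function.

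\textbf{Scaling back.} Transfer to $\kappa$ with the usual affine scaling rules:
\[
\norm{w}{0,\kappa}\sim h_\kappa\norm{\hat w}{0,\hat\kappa},\qquad
\abs{w}_{1,\kappa}\sim\abs{\hat w}_{1,\hat\kappa},\qquad
\norm{v}{0,e}\sim h_e^{1/2}\norm{\hat v}{0,\hat e}.
\]
These give
\[
\norm{L_ev}{0,\kappa}\le c\,h_\kappa h_e^{-1/2}\norm{v}{0,e},\qquad
\abs{L_ev}_{1,\kappa}\le c\,h_e^{-1/2}\norm{v}{0,e}.
\]
Using $h_\kappa\sim h_e$, the first is $\le c\,h_e^{1/2}\norm{v}{0,e}$, and multiplying the second by $h_e$ gives the same right-hand side. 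Adding the two yields the claim.

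\textbf{Main obstacle.} The delicate point is uniformity of the constants. Every constant must depend only on $r$ and on the shape regularity of the family $\tau_{nh}$, that is, on a bound for $h_\kappa/\rho_\kappa$. This is the assumption that gives both $h_\kappa\sim h_e$ and uniform bounds for the Jacobian of $F_\kappa$ and its inverse. The proposition is implicitly assuming such a regular family, and I would state this assumption explicitly.
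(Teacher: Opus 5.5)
The paper gives no proof of this proposition; it quotes it from Verfürth (1996), whose argument is the same reference-element scaling plus equivalence of norms on finite-dimensional polynomial spaces that you use. Your treatment of the first inequality is correct. So is the scaling step, including your explicit shape-regularity assumption giving $h_\kappa\sim h_e$ and $\abs{w}_{1,\kappa}\sim\abs{\hat w}_{1,\hat\kappa}$.

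The construction of $L_e$, however, is not right. Your $\hat L\hat v=\hat\psi_{\hat e}\,E\hat v$ has trace $\hat\psi_{\hat e}\hat v$ on $\hat e$, not $\hat v$, so it is not a lifting of $v$. It is the bubble-times-extension operator of Verfürth's book, which is really $v\mapsto L_e(\psi_e v)$ in the paper's notation. The bound you prove for it is true, but it concerns a different operator.

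Your explanation of the hypothesis is also wrong. The factor $\hat\psi_{\hat e}=\hat\lambda_1\hat\lambda_2$ already vanishes on the two other edges of $\hat\kappa$ for every $\hat v$. So in your construction the assumption that $v$ vanishes on $\partial e$ is never used.

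For a genuine lifting ($L_ev=v$ on $e$ and $L_ev=0$ on $\partial\kappa\setminus e$), the hypothesis is necessary by continuity at the endpoints of $e$, and this is where it enters. Since $\hat v$ vanishes at both endpoints, it factors as $\hat v=\hat\psi_{\hat e}\hat q$ with $\hat q\in P_{r-2}(\hat e)$. Set $\hat L\hat v:=\hat\psi_{\hat e}\,E\hat q$. This function equals $\hat v$ on $\hat e$ and vanishes on the other two edges. The map $\hat v\mapsto\hat L\hat v$ is linear on the finite-dimensional space $\{\hat v\in P_r(\hat e):\hat v=0\text{ on }\partial\hat e\}$, so $\norm{\hat L\hat v}{1,\hat\kappa}\le\hat c\norm{\hat v}{0,\hat e}$ follows as before. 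Your scaling argument then goes through unchanged.
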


For the a posteriori error estimates, consider $\forall t\in(t_{n-1},t_n)$ the piecewise affine function $u_h(t)$
which take the values
\[
u_h(t)=\frac{t-t_{n-1}}{\tau_n}(u_h^n-u_h^{n-1})+u_h^{n-1}
\]

The solutions of Problems~\eqref{eq:variational-problem} and~\eqref{eq:discrete-problem} verify the following
\begin{align*}
T(v)
&=\int_\Om\gamma\nabla(u-u_h)\nabla v(t,x)\dd x
+2\int_\Gam\frac{\partial(u-u_h)}{\partial t}(t,x)v(t,x)\dd x\\
&=\int_\Om\gamma\nabla u\nabla v\dd x
-\int_\Om\gamma\nabla u_h\nabla v\dd x
-2\int_\Gam\frac{\partial u_h}{\partial t}(t,x)v(t,x)\dd x
+2\int_\Gam\frac{\partial u}{\partial t}v\dd x\\
&=-\int_\Om\gamma\nabla u_h\nabla v\dd x
-2\int_\Gam\frac{\partial u_h}{\partial t}(t,x)v(t,x)\dd x
\end{align*}
adding and subtracting $u_h^n$ and $u_h^{n-1}$ to the first term, then using the value of $u_h$ we get
\newpage
\begin{align*}
T(v)
&=-\int_\Om\gamma\nabla(u_h-u_h^n)\nabla v\dd x
-\int_\Om\gamma\nabla(u_h^n-u_h^{n-1})\nabla v\dd x
-\int_\Om\nabla u_h^{n-1}\nabla v\dd x
-2\int_\Gam\frac{u_h^n-u_h^{n-1}}{\tau_n}(t,x)v(t,x)\dd x\\
&=-\frac{t-t_n}{\tau_n}\int_\Om\gamma\nabla(u_h^n-u_h^{n-1})\nabla v\dd x
-\int_\Om\gamma\nabla(u_h^n-u_h^{n-1})\nabla v\dd x
-\int_\Om\gamma\nabla u_h^{n-1}\nabla v\dd x
-2\int_\Gam\frac{u_h^n-u_h^{n-1}}{\tau_n}(t,x)v(t,x)\dd x\\
&=-\frac{t-t_{n-1}}{\tau_n}\int_\Om\gamma\nabla(u_h^n-u_h^{n-1})\nabla v\dd x
-\int_\Om\gamma\nabla u_h^{n-1}\nabla v\dd x
-2\int_\Gam\frac{u_h^n-u_h^{n-1}}{\tau_n}(t,x)v(t,x)\dd x
\end{align*}
adding and subtracting $v_h$ to the second and third terms, we get
\begin{align*}
T(v)
&=-\frac{t-t_{n-1}}{\tau_n}\int_\Om\gamma\nabla(u_h^n-u_h^{n-1})\nabla v\dd x
-\int_\Om\gamma\nabla u_h^{n-1}\nabla(v-v_h)\dd x
-\int_\Om\gamma\nabla u_h^{n-1}\nabla v_h\dd x\\
&\quad-2\int_\Gam\frac{u_h^n-u_h^{n-1}}{\tau_n}(v-v_h)\dd x
-2\int_\Gam\frac{u_h^n-u_h^{n-1}}{\tau_n}v_h\dd x\\
&=-\frac{t-t_{n-1}}{\tau_n}\int_\Om\gamma\nabla(u_h^n-u_h^{n-1})\nabla v\dd x
-\int_\Om\gamma\nabla u_h^{n-1}\nabla(v-v_h)\dd x
-\int_\Om\gamma\nabla u_h^n\nabla v_h\dd x\\
&\quad-2\int_\Gam\frac{u_h^n-u_h^{n-1}}{\tau_n}v_h\dd x
\end{align*}
adding and subtracting $v$ to the third term,
\begin{align*}
T(v)
&=-\frac{t-t_{n-1}}{\tau_n}\int_\Om\gamma\nabla(u_h^n-u_h^{n-1})\nabla v\dd x
-\int_\Om\gamma\nabla u_h^{n-1}\nabla(v-v_h)\dd x\\
&\quad-\int_\Om\gamma\nabla u_h^n\nabla(v-v_h)\dd x
-2\int_\Gam\frac{u_h^n-u_h^{n-1}}{\tau_n}(v-v_h)\dd x
+\int_\Om\gamma\nabla u_h^n\nabla v\dd x
\end{align*}
Applying Green's theorem on the second and third terms we get
\begin{align*}
T(v)
={}&-\frac{t-t_{n-1}}{\tau_n}\int_\Om\gamma\nabla(u_h^n-u_h^{n-1})\nabla v\dd x
-\sum_{k\in\tau_{nh}}\left(
\int_k\diver(\gamma\nabla u_h^{n-1})(v-v_h)\dd x
-\int_{\partial\kappa}(\nabla u_h^{n-1}.n)(v-v_h)\dd x\right)\\
&-\sum_{k\in\tau_{nh}}\left(
\int_k\diver(\gamma\nabla u_h^n)(v-v_h)\dd x
-\int_{\partial\kappa}(\nabla u_h^n.n)(v-v_h)\dd x\right)\\
&+\int_\Om\gamma\nabla u_h^n\nabla\dd x
-2\int_\Gam\frac{u_h^n-u_h^{n-1}}{\tau_n}(v-v_h)\dd x\\
={}&-\frac{t-t_{n-1}}{\tau_n}\int_\Om\gamma\nabla(u_h^n-u_h^{n-1})\nabla v\dd x
-\sum_{k\in\tau_{nh}}\int_{\partial\kappa}
(\nabla u_h^{n-1}.n)(v-v_h)\dd x\\
&-\sum_{k\in\tau_{nh}}\int_{\partial\kappa}
(\nabla u_h^n.n)(v-v_h)\dd x
+\int_\Om\gamma\nabla u_h^n\nabla v\dd x
-2\int_\Gam\frac{u_h^n-u_h^{n-1}}{\tau_n}(v-v_h)\dd x
\end{align*}

We define, for every edge $e$ of the mesh, the function
\[
(\phi_{h,n}^{e})=
\begin{cases}
\gamma[\nabla u_h^n.n]_e, & e\in\epsilon_k,\\[0.3em]
\gamma\nabla(u_h^n).n
+\gamma\nabla(u_h^{n-1}).n
+2\dfrac{u_h^n-u_h^{n-1}}{\tau_n}, & e\in\epsilon_k^m.
\end{cases}
\]
We get the following equation
\begin{align*}
&\int_\Om\gamma\nabla(u-u_h)\nabla v(t,x)\dd x
+2\int_\Gam\frac{\partial u-u_h}{\partial t}(t,x)v(t,x)\dd x\displaybreak[4]\\
&\quad=\frac{t_{n-1}-t}{\tau_n}\int_{\Om}
\gamma\nabla(u_h^n-u_h^{n-1})\nabla v\dd x
+\int_{\Om}\gamma\nabla u_h^{n-1}\nabla v\dd x
-\sum_{k\in\tau_{h,n}}\sum_{e\in\partial k}
\int_e\phi_{h,n}^e(x)(v-v_h)\dd x
\end{align*}

For each $k$ in $\tau_{h,n}$ we introduce the indicators
\[
\eta_{n,k}^{\tau}
=\sqrt{\frac{\tau_n}{3}}\norm{\nabla(u_h^n-u_h^{n-1})}{0,k}
+\sqrt{\tau_n}\norm{\nabla u_h^{n-1}}{0,k}
\qquad
(\eta_{n,k}^{h})^2
=\sum_{e\in\partial k}h_e\norm{\phi_{h,n}^e}{0,e}^2
\]

\subsection{Upper Bounds of the Error}

\begin{theorem}
For all $m=1,...,N$, we have the following upper bound
\[
\begin{aligned}
&c\norm{\nabla(u-u_h)}{L^2(0,t_m,L^2(\Om))}^2
+\norm{u(t_m)-u_h^m}{0,\Gam}^2\\
&\quad\le c'\Bigg[
 (\eta_{n,k}^{\tau})^2
+\sum_{n=1}^{m}\sum_{k}\tau_n(\eta_{n,k}^{h})^2\\
&\hspace{56mm}+\norm{u_0-u_h^0}{0,\Gam}^2\Bigg]
\end{aligned}
\]
where $c$ is a constant.
\end{theorem}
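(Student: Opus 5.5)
The plan is a standard energy argument on the error equation derived just before the statement, with the residual split into the time part and the space part. Write $e(t)=u(t)-u_h(t)$, where $u_h$ is the piecewise affine interpolant in time. For $t\in(t_{n-1},t_n)$, take $v=e(t)$ in the identity
\[
\int_\Om\gamma\nabla e\,\nabla v\dd x+2\int_\Gam\frac{\partial e}{\partial t}v\dd x
=\frac{t_{n-1}-t}{\tau_n}\int_\Om\gamma\nabla(u_h^n-u_h^{n-1})\nabla v\dd x
+\int_\Om\gamma\nabla u_h^{n-1}\nabla v\dd x
-\sum_{k}\sum_{e\in\partial k}\int_e\phi_{h,n}^e(v-v_h)\dd x .
\]
For $v_h$ take a Cl\'ement-type quasi-interpolant of $v$ in $X_{nh}$. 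This is legitimate because $v_h$ was arbitrary in the derivation; using Cl\'ement rather than $I_h$ avoids needing $H^2$ regularity. The left-hand side then becomes
\[
\int_\Om\gamma\abs{\nabla e}^2\dd x+\frac{\dd}{\dd t}\norm{e}{0,\Gam}^2 ,
\]
and the lower bound $\gamma\ge\gamma_0>0$ controls the first term from below.

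The right-hand side is handled term by term with Cauchy--Schwarz elementwise.
\begin{itemize}
\item The first term is bounded by $\frac{t-t_{n-1}}{\tau_n}\norm{\gamma}{\infty}\norm{\nabla(u_h^n-u_h^{n-1})}{0,k}\norm{\nabla e}{0,k}$ on each $k$.
\item The second term is bounded by $\norm{\gamma}{\infty}\norm{\nabla u_h^{n-1}}{0,k}\norm{\nabla e}{0,k}$.
\item For the edge term, use the Cl\'ement estimates $\norm{v-v_h}{0,e}\le Ch_e^{1/2}\abs{v}_{1,\Delta_e}$. This holds for interior edges and also for boundary edges in $\epsilon_k^m$, via the trace inequality on the patch. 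Together with finite overlap of the patches $\Delta_e$ (shape regularity), this gives a bound by $C\bigl(\sum_k(\eta^h_{n,k})^2\bigr)^{1/2}\norm{\nabla e}{0,\Om}$.
\end{itemize}
Young's inequality $ab\le\frac{\epsilon}{2}a^2+\frac{1}{2\epsilon}b^2$ with $\epsilon$ small (e.g.\ $\epsilon=\gamma_0/4$ for each of the three pieces) absorbs all $\norm{\nabla e}{0,\Om}^2$ contributions into the left side.

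Next integrate over $(t_{n-1},t_n)$. For the time-residual contribution, use
\[
\int_{t_{n-1}}^{t_n}\Bigl(\frac{t-t_{n-1}}{\tau_n}\Bigr)^2\dd t=\frac{\tau_n}{3}.
\]
This is exactly where the factor $\sqrt{\tau_n/3}$ in $\eta^\tau_{n,k}$ comes from, while the second term produces $\tau_n\norm{\nabla u_h^{n-1}}{0,k}^2$. Using $(a+b)^2\le2(a^2+b^2)$, the sum of the two squared contributions is bounded by $2(\eta^\tau_{n,k})^2$. The edge term integrates to $\tau_n\sum_k(\eta^h_{n,k})^2$, since $\phi^e_{h,n}$ is constant in $t$ on the interval. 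Summing over $n=1,\dots,m$ and telescoping $\norm{e(t_n)}{0,\Gam}^2-\norm{e(t_{n-1})}{0,\Gam}^2$ gives
\[
c\norm{\nabla e}{L^2(0,t_m,L^2(\Om))}^2+\norm{e(t_m)}{0,\Gam}^2\le c'\Bigl[\sum_{n,k}(\eta^\tau_{n,k})^2+\sum_{n,k}\tau_n(\eta^h_{n,k})^2+\norm{u_0-u_h^0}{0,\Gam}^2\Bigr].
\]
Here $e(t_m)=u(t_m)-u_h^m$ because $u_h$ interpolates the nodal values. I read the first indicator term in the statement as carrying the implicit sum over $n$ and $k$. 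No Gronwall step is needed, because there is no zeroth-order term on the right.

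The main obstacle is the edge/boundary residual. The error identity involves $v-v_h$ on $\Gam$ through the term $2\int_\Gam\frac{u_h^n-u_h^{n-1}}{\tau_n}(v-v_h)$. This term must combine exactly with the normal fluxes on boundary edges to form $\phi^e_{h,n}$ for $e\in\epsilon^m_k$, which requires care with the signs in the Green formula. The two boundary-flux terms (from $u_h^{n-1}$ and $u_h^n$) must also be reconciled with the definition of $\phi^e_{h,n}$. Beyond that, a trace-type Cl\'ement estimate on boundary edges is needed so that $h_e^{1/2}$ scaling holds there, bounded by $\abs{v}_{1,\Delta_e}$ rather than the full $H^1$ norm. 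If only the full norm is available, I would instead bound by $\norm{e}{1,\Om}$ and control the $L^2(\Om)$ part by a Friedrichs inequality $\norm{w}{0,\Om}\le C(\abs{w}_{1,\Om}+\norm{w}{0,\Gam})$. That route produces an extra $\norm{e}{0,\Gam}^2$ term on the right, which is then handled by the discrete Gronwall lemma stated earlier (Gronwall's lemma) at the cost of an $e^{cT}$ factor absorbed into $c'$.
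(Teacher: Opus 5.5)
Your proposal is correct and is essentially the paper's argument. You test the error identity with $v=u-u_h$, take $v_h$ to be the Cl\'ement interpolant, bound the time part elementwise (giving the factor $\tau_n/3$) and the edge part via the Cl\'ement edge estimate with finite overlap, absorb the $\norm{\nabla(u-u_h)}{0,\Om}^2$ contributions by Young's inequality, and telescope the boundary term.

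The only difference is that the paper first passes to the weighted error $w=e^{-t}(u-u_h)$ and undoes the weight at the cost of a factor $e^{2T}$. As you observe, this is unnecessary because no zeroth-order term appears, so your direct version is cleaner. It also avoids the paper's unjustified comparison $L(w)\le L(u-u_h)$, and your elementwise Cauchy--Schwarz followed by a discrete one over $k$ is the correct way to reach $\sum_k(\eta^{\tau}_{n,k})^2$.
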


\begin{proof}
We denote by $L(v)$ the following,
\[
L(v)=\int_\Om\gamma\nabla(u-u_h)\nabla v(t,x)\dd x
+2\int_\Gam\frac{\partial u-u_h}{\partial t}v\dd s
\]
and we define the function $w(t,x)$ by
\[
w(t,x)=e^{-t}(u-u_h)(t,x)
\]
which verifies the equation
\[
\frac{\partial w}{\partial t}+w=e^{-t}\frac{\partial(u-u_h)}{\partial t}
\]
Multiplying $L(v)$ by $e^{-t}$ and taking $w=v$,
\begin{align*}
e^{-t}L(v)
&=\int_\Om\gamma\nabla(e^{-t}(u-u_h))\nabla v\dd x
+2\int_\Gam e^{-t}\frac{\partial u-u_h}{\partial t}v\dd s\\
&=\int_\Om\gamma\nabla w\nabla v\dd x
+2\int_\Gam wv\dd s+2\int_\Gam\frac{\partial w}{\partial t}v\dd s\\
&=\int_\Om\gamma\abs{\nabla w}^2\dd x
+2\int_\Gam w^2\dd s+\int_\Gam\frac{\partial(w^2)}{\partial t}\dd s\\
&\ge \int_\Om\gamma\abs{\nabla w}^2\dd x
+\int_\Gam\frac{\partial(w^2)}{\partial t}\dd s\\
&\ge c\norm{\nabla w}{0,\Om}^2+\int_\Gam\frac{\partial w^2}{\partial t}\dd s
\end{align*}
Note that $e^{-t}\le1$, so $L(w)\le L(u-u_h)$, then we have the following
\[
c\norm{\nabla w}{0,\Om}^2+\int_\Gam\frac{\partial w^2}{\partial t}\dd s
\le \int_\Om\nabla(u-u_h)\nabla(u-u_h)\dd x
+\int_\Gam\frac{\partial(u-u_h)}{\partial t}(u-u_h)\dd s
\]
Integrating in $(t_{n-1},t_n)$, we get
\[
\int_{t_{n-1}}^{t_n}c\norm{\nabla w}{0,\Om}^2\dd t
+\int_{t_{n-1}}^{t_n}\int_\Gam\frac{\partial(w^2)}{\partial t}\dd s\dd t
\le\int_{t_{n-1}}^{t_n}L(u-u_h)\dd t
\]
\[
\int_{t_{n-1}}^{t_n}c\norm{\nabla w}{0,\Om}^2\dd t
+\int_\Gam w^2(t_n,s)\dd s-\int_\Gam w^2(t_{n-1},s)\dd s
\le\int_{t_{n-1}}^{t_n}L(u-u_h)\dd t
\]
Taking the sum from $1$ to $m$, we get
\newpage
\begin{align*}
&c\sum_{n=1}^{m}\int_{t_{n-1}}^{t_n}
\norm{\nabla e^{-t}(u-u_h)}{0,\Om}^2\dd t
-\int_\Gam e^{-2t}\abs{u-u_h}^2(0,s)\dd s\\
&\qquad+\int_\Gam e^{-2t}\abs{u-u_h}^2(t_m,s)\dd s
\le\sum_{n=1}^{m}\int_{t_{n-1}}^{t_n}L(u-u_h)\dd t
\end{align*}
\begin{align*}
e^{-2T}\left[
 c\sum_{n=1}^{m}\left(\int_{t_{n-1}}^{t_n}\norm{\nabla(u-u_h)}{0,\Om}^2\dd t
+\int_\Gam\abs{u-u_h}^2(t_m,s)\dd s\right)\right]
&\le\sum_{n=1}^{m}\int_{t_{n-1}}^{t_n}L(u-u_h)\dd t\\
&\quad+\int_\Gam\abs{u-u_h}^2(0,s)\dd s
\end{align*}
so that
\[
\int_0^{t_m}c\norm{\nabla(u-u_h)}{0,\Om}^2\dd t
+\norm{u(t_m)-u_h^m}{0,\Gam}^2
\le c'\left[
\sum_{n=1}^{m}\int_{t_{n-1}}^{t_n}L(u-u_h)\dd t
+\norm{u_0-u_h^0}{0,\Gam}^2\right]
\]

We decompose $L(v)=L_1(v)+L_2(v)$ and denote $v=u-u_h$.\\
Now we have to bound $L_1(v)$,
\begin{align*}
L_1(v)
&=\frac{t_n-t}{\tau_n}\int_\Om\gamma\nabla(u_h^n-u_h^{n-1})\nabla v\dd x
-\int_\Om\gamma\nabla u_h^{n-1}\nabla v\dd x\\
&=\frac{t_n-t}{\tau_n}\sum_{k\in\tau_{h,n}}
\int_k\gamma\nabla(u_h^n-u_h^{n-1})\nabla v\dd x
-\sum_{k\in\tau_{h,n}}\int_k\gamma\nabla u_h^{n-1}\nabla v\dd x\\
&\le \abs{\frac{t_n-t}{\tau_n}}
\sum_{k\in\tau_{h,n}}c_\gamma
\norm{\nabla(u_h^n-u_h^{n-1})}{0,k}\norm{\nabla v}{0,k}
+\sum_{k\in\tau_{h,n}}c_\gamma
\norm{\nabla v_h^{n-1}}{0,k}\norm{\nabla v}{0,k}
\end{align*}
Integrating in $(t_{n-1},t_n)$, then taking the sum from $1$ to $m$, we get
\begin{align*}
\int_{t_{n-1}}^{t_n}L_1(v)\dd t
&\le\sum_{k\in\tau_{h,n}}
\left[\int_{t_{n-1}}^{t_n}
\left(\abs{\frac{t_n-t}{\tau_n}}c
\norm{\nabla(u_h^n-u_h^{n-1})}{0,k}\right)^2\dd t\right]^{1/2}
\left[\int_{t_{n-1}}^{t_n}\norm{\nabla v}{0,k}^2\dd t\right]^{1/2}\\
&\quad+\sum_{k\in\tau_{h,n}}
\left[\int_{t_{n-1}}^{t_n}c_\gamma
\norm{\nabla v_h^{n-1}}{0,k}^2\dd t\right]^{1/2}
\left[\int_{t_{n-1}}^{t_n}\norm{\nabla v}{0,k}^2\dd t\right]^{1/2}\\
&=\sum_{k\in\tau_{h,n}}
\left[\frac{c_\gamma^2\tau_n}{3}
\norm{\nabla(u_h^n-u_h^{n-1})}{0,k}^2\right]^{1/2}
\left[\int_{t_{n-1}}^{t_n}\norm{\nabla v}{0,k}^2\dd t\right]^{1/2}\\
&\quad+\sum_{k\in\tau_{h,n}}
\left[c_\gamma^2\tau_n\norm{\nabla v_h^{n-1}}{0,k}^2\dd t\right]^{1/2}
\left[\int_{t_{n-1}}^{t_n}\norm{\nabla v}{0,k}^2\dd t\right]^{1/2}\\
&=\left[\int_{t_{n-1}}^{t_n}\norm{\nabla v}{0,k}^2\dd t\right]^{1/2}
\sum_{k\in\tau_{h,n}}c_\gamma\left[
\sqrt{\frac{\tau_n}{3}}\norm{\nabla(u_h^n-u_h^{n-1})}{0,k}
+\sqrt{\tau_n}\norm{\nabla v_h^{n-1}}{0,k}\right]\\
&=\left[\int_{t_{n-1}}^{t_n}\norm{\nabla v}{0,k}^2\dd t\right]^{1/2}
\sum_{k\in\tau_{h,n}}c_\gamma\left[\eta_{n,k}^{\tau}\right]\\
&=\left[\int_{t_{n-1}}^{t_n}\norm{\nabla v}{0,k}^2\dd t\right]^{1/2}
\sum_{k\in\tau_{h,n}}c_\gamma^2
\left[(\eta_{n,k}^{\tau})^2\right]^{1/2}
\end{align*}
Using the inequality $ab\le \frac{1}{2\epsilon_1}a^2+\frac{\epsilon_1}{2}b^2$, with
$a=\left[c_\gamma^2(\eta_{n,k}^{\tau})^2\right]^{1/2}$ and
$b=\left[\int_{t_{n-1}}^{t_n}\norm{\nabla v}{0,k}^2\dd t\right]^{1/2}$,

we get
\[
\int_{t_{n-1}}^{t_n}L_1(v)\dd t
\le\frac{c_\gamma^2}{2\epsilon_1}
\sum_{k\in\tau_{h,n}}(\eta_{n,k}^{\tau})^2
+\frac{\epsilon_1}{2}\norm{\nabla v}{L^2(t_{n-1},t_n,L^2(\Om))}^2
\]
\newpage
Taking sum from $n=1,..,m$, we get
\[
\sum_{n=1}^{m}\int_{t_{n-1}}^{t_n}L_1(v)\dd t
\le c_\gamma\sum_{n=1}^{m}\sum_{k\in\tau_{h,n}}(\eta_{n,k}^{\tau})^2
+\frac{\epsilon_1}{2}\norm{\nabla(u-u_h)}{L^2(0,t_m,L^2(\Om))}^2
\]

Next, we will bound $L_2(v)$, using the following proposition (Clement, 1975) The cl\'ement regularization operator
$R_{n,h}:H^1(\Om)\to X_h$ has the following property,
$\forall k\in\tau_{n,h}$ and $\forall v\in H^1(\Om)$, we have the following
\[
\norm{v-R_{n,h}v}{0,k}\le ch_k\norm{\nabla v}{0,\Delta_k}
\]
and
\[
\norm{v-R_{n,h}v}{0,e}\le ch_e^{1/2}\norm{\nabla v}{0,\Delta_e}
\]
\begin{align*}
L_2(v)
&=-\sum_{k\in\tau_{h,n}}\sum_{e\in\partial k}
\int_e\phi_{h,n}^e(x)(v-v_h)\dd x\\
&\le\sum_{k\in\tau_{h,n}}\sum_{e\in\partial k}
\norm{\phi_{h,n}^e}{0,e}\norm{v(t)-v_h(t)}{0,e}
\end{align*}
Now we take $v_h(t)=R_{n,h}(v(t))$, and use the above preposition to get
\begin{align*}
L_2(v)
&\le\sum_{k\in\tau_{h,n}}\sum_{e\in\partial k}
\norm{\phi_{h,n}^e}{0,e}\norm{v(t)-R_{n,h}(v(t))}{0,e}\\
&\le\sum_{k\in\tau_{h,n}}\sum_{e\in\partial k}
\norm{\phi_{h,n}^e}{0,e}c_2h_e^{1/2}\norm{\nabla v(t)}{0,\Delta_e}
\end{align*}
Using the inequality $\sum ab\le(\sum a^2)^{1/2}(\sum b^2)^{1/2}$, with
$a=h_e^{1/2}\norm{\phi_{h,n}^e}{0,e}$ and $b=\norm{\nabla v(t)}{0,\Delta_e}$, we obtain
\begin{align*}
L_2(v)
&\le c_2\sum_{k\in\tau_{h,n}}
\left[\sum_{e\in\partial k}h_e\norm{\phi_{h,n}^e}{0,e}^2\right]^{1/2}
\left[\sum_{e\in\partial k}\norm{\nabla v(t)}{0,\Delta_e}^2\right]^{1/2}\\
&\le c_2\left[\sum_{k\in\tau_{h,n}}(\eta_{n,k}^{h})^2\right]^{1/2}
\left[\sum_{k\in\tau_{h,n}}\sum_{e\in\partial k}
\norm{\nabla v(t)}{0,\Delta_e}^2\right]^{1/2}\\
&\le c_3\left[\sum_{k\in\tau_{h,n}}(\eta_{n,k}^{h})^2\right]^{1/2}
\norm{\nabla v(t)}{0,\Om}
\end{align*}
Integrating in $(t_{n-1},t_n)$,
\begin{align*}
\int_{t_{n-1}}^{t_n}L_2(v)\dd t
&\le c_3\left[\int_{t_{n-1}}^{t_n}
\sum_{k\in\tau_{h,n}}(\eta_{n,k}^{h})^2\dd t\right]^{1/2}
\left[\int_{t_{n-1}}^{t_n}\norm{\nabla v(t)}{0,\Om}^2\dd t\right]^{1/2}\\
&\le c_3\left[\sum_{k\in\tau_{h,n}}\tau_n(\eta_{n,k}^{h})^2\right]^{1/2}
\left[\norm{\nabla v}{L^2(t_{n-1},t_n,L^2(\Om))}\right]
\end{align*}
Taking the sum from $n=1,\ldots,m$, we get
\[
\sum_{n=1}^{m}\int_{t_{n-1}}^{t_n}L_2(v)\dd t
\le c_3\left[\sum_{n=1}^{m}\sum_{k\in\tau_{h,n}}
\tau_n(\eta_{n,k}^{h})^2\right]^{1/2}
\left[\norm{\nabla v}{L^2(0,t_m,L^2(\Om))}\right]
\]
\newpage
Using $ab\le \frac{1}{2\epsilon_2}a^2+\frac{\epsilon_2}{2}b^2$,we get
\begin{align*}
\sum_{n=1}^{m}\int_{t_{n-1}}^{t_n}L_2(u-u_h)\dd t
&\le\frac{c_3}{2\epsilon_2}\sum_{n=1}^{m}\sum_{k\in\tau_{h,n}}
\tau_n(\eta_{n,k}^{h})^2
+\frac{\epsilon_2}{2}\norm{\nabla(u-u_h)}{L^2(0,t_m,L^2(\Om))}^2\\
&=c_4\sum_{n=1}^{m}\sum_{k\in\tau_{h,n}}
\tau_n(\eta_{n,k}^{h})^2
+\frac{\epsilon_2}{2}\norm{\nabla(u-u_h)}{L^2(0,t_m,L^2(\Om))}^2
\end{align*}
Using the above bounds, and choosing $\epsilon_1=c/2$ and $\epsilon_2=c/2$ we get
\[
\begin{aligned}
&c\norm{\nabla(u-u_h)}{L^2(0,t_m,L^2(\Om))}^2
+\norm{u(t_m)-u_h^m}{0,\Gam}^2\\
&\quad\le c'\Bigg[
\sum_{n=1}^{m}\sum_{k\in\tau_{h,n}}(\eta_{n,k}^{h})^2
+\sum_{n=1}^{m}\sum_{k\in\tau_{h,n}}\tau_n(\eta_{n,k}^{h})^2\\
&\hspace{56mm}+\norm{u_0-u_h^0}{0,\Gam}^2\Bigg]
\end{aligned}
\tag*{$\square$}
\]
\end{proof}

\begin{theorem}
For all $m=1,2,..,N$ we have
\[
\norm{\frac{\partial(u-u_h)}{\partial t}}
{L^2(0,t_m,H^{-1/2}(\Gam))}^2
\le c'\left[
\sum_{n=1}^{m}\sum_{k\in\tau_{h,n}}
\left((\eta_{n,k}^{\tau})^2+\tau_n(\eta_{n,k}^{\tau})^2\right)
+\norm{u_0-u_h^0}{0,\Gam}^2\right]
\]
where $c'$ is a constant.
\end{theorem}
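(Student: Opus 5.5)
The idea is to bound the time derivative of the error on the boundary by duality, using the error identity derived above, and then to control every resulting term with the upper bound of the previous theorem. We work with the norm of $H^{-1/2}(\Gam)$ as the dual of $H^{1/2}(\Gam)$:
\[
\norm{\frac{\partial(u-u_h)}{\partial t}(t)}{-1/2,\Gam}
=\sup_{\varphi\in H^{1/2}(\Gam)}\frac{\int_\Gam\frac{\partial(u-u_h)}{\partial t}(t)\varphi\,\dd s}{\norm{\varphi}{1/2,\Gam}} .
\]
For a given $\varphi\in H^{1/2}(\Gam)$ we take $v\in H^1(\Om)$ to be a lifting of $\varphi$, with trace $\varphi$ and $\norm{v}{1,\Om}\le c\norm{\varphi}{1/2,\Gam}$. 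One could use the $\gamma$-harmonic extension, but a bounded right inverse of the trace operator is enough.

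With this $v$, the identity defining $T(v)$ (equivalently $L(v)$) expresses $2\int_\Gam \partial_t(u-u_h)\varphi\,\dd s$ in three pieces:
\begin{itemize}
\item the term $-\int_\Om\gamma\nabla(u-u_h)\nabla v\,\dd x$;
\item the time-residual terms $\frac{t_{n-1}-t}{\tau_n}\int_\Om\gamma\nabla(u_h^n-u_h^{n-1})\nabla v\,\dd x+\int_\Om\gamma\nabla u_h^{n-1}\nabla v\,\dd x$, which form $L_1(v)$;
\item the space-residual term $-\sum_{k}\sum_{e\in\partial k}\int_e\phi^e_{h,n}(v-v_h)\,\dd x$, which forms $L_2(v)$, with $v_h=R_{n,h}v$ the Cl\'ement interpolant.
\end{itemize}

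We bound each piece for fixed $t\in(t_{n-1},t_n)$.
\begin{itemize}
\item The first piece is at most $c_\gamma\norm{\nabla(u-u_h)}{0,\Om}\norm{\varphi}{1/2,\Gam}$.
\item For $L_1$, Cauchy--Schwarz elementwise gives at most $c\bigl(\sum_k\norm{\nabla(u_h^n-u_h^{n-1})}{0,k}^2+\norm{\nabla u_h^{n-1}}{0,k}^2\bigr)^{1/2}\norm{\varphi}{1/2,\Gam}$.
\item For $L_2$, the Cl\'ement estimates on edges and the finite overlap of the patches $\Delta_e$ give, exactly as in the previous proof, at most $c\bigl(\sum_k(\eta^h_{n,k})^2\bigr)^{1/2}\norm{\varphi}{1/2,\Gam}$.
\end{itemize}

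Dividing by $\norm{\varphi}{1/2,\Gam}$, squaring and integrating over $(t_{n-1},t_n)$ gives three contributions:
\begin{itemize}
\item The weight $\frac{t-t_{n-1}}{\tau_n}$ integrates to $\tau_n/3$, so the time-residual part reproduces $\sum_k(\eta^\tau_{n,k})^2$ up to a constant.
\item The space part contributes $\tau_n\sum_k(\eta^h_{n,k})^2$.
\item The first piece contributes $\norm{\nabla(u-u_h)}{L^2(t_{n-1},t_n,L^2(\Om))}^2$.
\end{itemize}
Summing over $n=1,\dots,m$, the accumulated gradient error $\norm{\nabla(u-u_h)}{L^2(0,t_m,L^2(\Om))}^2$ is bounded by the previous theorem in terms of the indicators and $\norm{u_0-u_h^0}{0,\Gam}^2$. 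This yields the stated estimate, with the indicator sum read as $(\eta^\tau_{n,k})^2+\tau_n(\eta^h_{n,k})^2$; the second $\eta^\tau$ in the statement appears to be a misprint for $\eta^h$.

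The step I expect to be delicate is the interaction between the lifting and the Cl\'ement interpolant near $\Gam$. The boundary-edge part of $\phi^e_{h,n}$ is paired with $v-R_{n,h}v$ on boundary edges, and this pairing is controlled by the local $H^1$ norm of the lifting rather than by $\varphi$ directly. Each such term must therefore be routed through $\norm{v}{1,\Om}\le c\norm{\varphi}{1/2,\Gam}$, and one must check that $v$ can be fixed independently of $t$ so that the time integration is legitimate. A second point to watch is that the duality has to be taken with $\partial_t(u-u_h)$ alone: the factor $2$ and the sign conventions in the identity for $T(v)$ must be tracked so that the boundary term is isolated exactly, and not mixed with the discrete term $2\int_\Gam\frac{u_h^n-u_h^{n-1}}{\tau_n}v_h\,\dd x$ that the discrete scheme eliminates.
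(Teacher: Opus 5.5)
Your proposal follows the paper's proof essentially step for step. Both use the duality definition of the $H^{-1/2}(\Gam)$ norm and an $H^1$ lifting of the boundary test function. The paper takes the $\gamma$-harmonic lifting, but it only uses the bound $\norm{v}{1,\Om}\le c\norm{\varphi}{1/2,\Gam}$, so your bounded right inverse of the trace works equally well. The rest matches as well: the residual identity with the $L_1$ and $L_2$ bounds, integration over each $(t_{n-1},t_n)$, and the previous theorem to absorb $\norm{\nabla(u-u_h)}{L^2(0,t_m,L^2(\Om))}^2$.

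Your reading of the second indicator as $\tau_n(\eta^h_{n,k})^2$ is what the paper's own argument actually produces from the $L_2$ term. Your worry about fixing $v$ independently of $t$ is unnecessary, since the supremum is taken pointwise in $t$ before squaring and integrating.
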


\begin{proof}
Define the functions $r(t,x)\in H^{1/2}(\Gam)$ and
$w(t,x)=e^{-t}(u-u_h)(t,x)\in H^1(\Om)$, and consider the problem
\begin{equation}
\begin{cases}
\diver(\gamma\nabla w(t,x))=0,&\text{in }(0,T)\times\Om,\\
w(t,x)=r(t,x),&\text{on }(0,T)\times\Gam.
\end{cases}
\label{eq:harmonic-w}
\end{equation}
which admits a unique solution $w(t)\in H^1(\Om)$ verifying,
\[
\norm{\nabla w(t)}{0,\Om}\le c_1\norm{r}{1/2,\Gam}
\]
Consider the equation
\begin{align*}
&\int_\Om\gamma\nabla(u-u_h)\nabla v(t,x)\dd x
+2\int_\Gam\frac{\partial u-u_h}{\partial t}v\dd s\\
&\quad=\frac{t_n-t}{\tau_n}\int_\Om\gamma\nabla(u_h^n-u_h^{n-1})\nabla v\dd x
-\int_\Om\gamma\nabla u_h^{n-1}\nabla v\dd x
-\sum_{k\in\tau_{h,n}}\sum_{e\in\partial k}
\int_e\phi_{h,n}^e(x)(v-v_h)\dd x
\end{align*}
Using the inequalities
\[
L_1(v)\le
\abs{\frac{t_n-t}{\tau_n}}
\sum_{k\in\tau_{h,n}}c\norm{\nabla(u_h^n-u_h^{n-1})}{0,k}\norm{\nabla v}{0,k}
+\sum_{k\in\tau_{h,n}}c\norm{\nabla v_h^{n-1}}{0,k}\norm{\nabla v}{0,k}
\]
and
\[
L_2(v)\le c_3\left[\sum_{k\in\tau_{h,n}}
(\eta_{n,k}^{h})^2\right]^{1/2}\norm{\nabla v(t)}{0,\Om}
\]
we get
\begin{align*}
&\int_\Om\gamma\nabla(u-u_h)\nabla v(t,x)\dd x
+2\int_\Gam\frac{\partial(u-u_h)}{\partial t}v\dd s\\
&\quad\le\norm{\nabla v(t)}{0,\Om}\Bigg[
\abs{\frac{t_n-t}{\tau_n}}\sum_{k\in\tau_{h,n}}
c\norm{\nabla(u_h^n-u_h^{n-1})}{0,k}\\
&\hspace{31mm}
+\sum_{k\in\tau_{h,n}}c\norm{\nabla u_h^{n-1}}{0,k}
+c_3\left(\sum_{k\in\tau_{h,n}}(\eta_{n,k}^{h})^2\right)^{1/2}
\Bigg]
\end{align*}
dividing by $\norm{v}{1,\Om}$ then using Cauchy-Schwartz inequality, we get
\begin{align*}
\frac{2}{\norm{v(t)}{1,\Om}}
\int_\Gam\frac{\partial(u-u_h)}{\partial t}v(t,s)\dd s&{}\displaybreak[4]\\
&\le c_\gamma\norm{\nabla(u-u_h)}{0,\Om}
+c_\gamma\abs{\frac{t_{n-1}-t}{\tau_n}}
\left(\sum_\kappa\norm{\nabla(u_h-u_h^{n-1})}{0,k}^2\right)^{1/2}
+c\left(\sum_\kappa(\eta_{n,k}^{h})^2\right)^{1/2}
+\left(\sum_\kappa c_\gamma\norm{\nabla u_h^n}{0,\Om}^2\right)^{1/2}
\end{align*}
For every $r(t)\in H^{1/2}(\Gam)$, consider the harmonic lifting in $v\in H^1(\Om)$ satisfying,
\begin{equation}
\begin{cases}
\diver(\gamma\nabla v(t,x))=0,&\text{in }(0,T)\times\Om,\\
v(t,x)=r(t,x),&\text{on }(0,T)\times\Gam.
\end{cases}
\label{eq:harmonic-v}
\end{equation}
where
\[
\norm{v(t)}{1,\Om}\le c_1\norm{r}{1/2,\Gam}
=c_1\norm{v}{1/2,\Gam}
\]
so
\[
\frac{1}{\norm{v(t)}{1/2,\Gam}}\le\frac{1}{\norm{v(t)}{1,\Om}}
\]
but
\[
\sup_{v\in H^{1/2}(\Gam)}
\frac{\displaystyle\int_\Gam\frac{\partial(u-u_h)}{\partial t}v\dd s}
{\norm{v}{1/2,\Gam}}
=\norm{\frac{\partial(u-u_h)}{\partial t}}{-1/2,\Gam}
\]
therefore, after integrating over $(t_{n-1},t_n)$, and taking sum from $n=1,\ldots,m$ we get
\[
\norm{\frac{\partial(u-u_h)}{\partial t}}
{L^2(0,t_m,H^{-1/2}(\Gam))}^2
\le c'\left[
\sum_{n=1}^{m}\sum_{k\in\tau_{h,n}}
\left((\eta_{n,k}^{\tau})^2+\tau_n(\eta_{n,k}^{\tau})^2\right)
+\norm{u_0-u_h^0}{0,\Gam}^2\right]
\tag*{$\square$}
\]
\end{proof}

\begin{theorem}
For all $m=1,...,N$, we have the following
\[
\norm{\nabla(u-\pi_\tau u_h)}{L^2(0,t_m,L^2(\Om))}^2
\le c\left[
\sum_{n=1}^{m}\sum_{k\in\tau_{h,n}}
\left((\eta_{n,k}^{\tau})^2+\tau_n(\eta_{n,k}^{\tau})^2\right)
+\norm{u_0-u_h^0}{0,\Gam}^2\right]
\]
where $c$ is a constant.
\end{theorem}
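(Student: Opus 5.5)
The plan is to split the error at the level of the piecewise affine reconstruction $u_h$ used throughout this section. Here $\pi_\tau u_h$ is the piecewise constant in time function equal to $u_h^n$ on $(t_{n-1},t_n]$. On each subinterval we write
\[
\nabla(u-\pi_\tau u_h)=\nabla(u-u_h)+\nabla(u_h-u_h^n),
\]
so that, by $(a+b)^2\le 2a^2+2b^2$,
\[
\norm{\nabla(u-\pi_\tau u_h)}{L^2(0,t_m,L^2(\Om))}^2\le 2\norm{\nabla(u-u_h)}{L^2(0,t_m,L^2(\Om))}^2+2\sum_{n=1}^{m}\int_{t_{n-1}}^{t_n}\norm{\nabla(u_h-u_h^n)}{0,\Om}^2\dd t .
\]
The two pieces are then treated separately.

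The second piece is purely a time-discretization quantity and is computed exactly. For $t\in(t_{n-1},t_n)$ we have $u_h(t)-u_h^n=\frac{t-t_n}{\tau_n}(u_h^n-u_h^{n-1})$. Hence
\[
\int_{t_{n-1}}^{t_n}\norm{\nabla(u_h-u_h^n)}{0,\Om}^2\dd t=\frac{\tau_n}{3}\sum_{k\in\tau_{h,n}}\norm{\nabla(u_h^n-u_h^{n-1})}{0,k}^2\le\sum_{k\in\tau_{h,n}}(\eta_{n,k}^{\tau})^2 ,
\]
directly from the definition of $\eta_{n,k}^{\tau}$, whose first summand is exactly $\sqrt{\tau_n/3}\,\norm{\nabla(u_h^n-u_h^{n-1})}{0,k}$. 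Summing over $n=1,\dots,m$ gives a contribution of the required form.

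For the first piece I would reuse the energy argument of the upper bound theorem with the weight $w=e^{-t}(u-u_h)$. This gives
\[
\norm{\nabla(u-u_h)}{L^2(0,t_m,L^2(\Om))}^2\le c'\Big[\sum_{n=1}^m\int_{t_{n-1}}^{t_n}L(u-u_h)\dd t+\norm{u_0-u_h^0}{0,\Gam}^2\Big].
\]
I would then split $L=L_1+L_2$ exactly as in that theorem. The $L_1$ part is bounded, after Young's inequality and absorption of $\tfrac{\epsilon_1}{2}\norm{\nabla(u-u_h)}{}^2$ into the left side, by $\sum_n\sum_k(\eta_{n,k}^\tau)^2$. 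The $L_2$ part is bounded through the Cl\'ement operator $R_{n,h}$ and absorbed in the same way. The result is then put in the same form as the right-hand side of the preceding $H^{-1/2}(\Gam)$ theorem, which is exactly $\sum_n\sum_k\big((\eta^\tau_{n,k})^2+\tau_n(\eta^\tau_{n,k})^2\big)+\norm{u_0-u_h^0}{0,\Gam}^2$. Combining with the second piece and enlarging constants then yields the stated inequality.

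The main obstacle is the $L_2$ (spatial residual) contribution. The Cl\'ement argument naturally produces $\tau_n(\eta^h_{n,k})^2$, not $\tau_n(\eta^\tau_{n,k})^2$, and no earlier result in the paper converts one into the other. So this step does not follow as written. To obtain the statement as printed, I would first try to use the discrete equation \eqref{eq:discrete-problem} to rewrite the edge residual $\phi^e_{h,n}$ so that it is controlled by $\nabla u_h^{n-1}$ and $\nabla(u_h^n-u_h^{n-1})$, i.e.\ by the $\eta^\tau$ quantities. If that fails, the argument above genuinely proves the bound only with $\tau_n(\eta^h_{n,k})^2$ on the right, and I would record that this is where the stated form is not reached.
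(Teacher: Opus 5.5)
Your splitting $\nabla(u-\pi_\tau u_h)=\nabla(u-u_h)+\nabla(u_h-u_h^n)$ and your treatment of the second piece match the paper's proof. The paper also uses $u_h-u_h^n=\frac{t-t_n}{\tau_n}(u_h^n-u_h^{n-1})$ and bounds that piece by $c_1\sum_k(\eta^\tau_{n,k})^2$; your exact $\tau_n/3$ computation is the cleaner version.

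The obstacle you flag in the first piece is real, and the paper does not resolve it. It writes ``using the previous theorem'' and states a bound for $\|\nabla(u-u_h)\|_{L^2(0,t_m;L^2(\Omega))}$ with $\tau_n(\eta^\tau_{n,k})^2$ on the right. That citation does not support the step:
\begin{itemize}
\item The previous theorem controls $\partial_t(u-u_h)$ in $L^2(0,t_m;H^{-1/2}(\Gamma))$, not the gradient.
\item Its proof uses $\|\nabla(u-u_h)\|_{0,\Omega}$ as an \emph{input}.
\item Its proof carries $\bigl(\sum_\kappa(\eta^h_{n,k})^2\bigr)^{1/2}$ until $\eta^h$ silently becomes $\eta^\tau$ in the final display.
\end{itemize}
The only gradient estimate in the paper is the first upper-bound theorem, and it has $\tau_n(\eta^h_{n,k})^2$, exactly as your Cl\'ement argument predicts. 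So you did not miss an idea available in the paper. As written, both arguments give the estimate with $\tau_n(\eta^h_{n,k})^2$ in place of $\tau_n(\eta^\tau_{n,k})^2$, and neither reaches the printed statement.

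The conversion you hope for can still be carried out, though for an unflattering reason. It needs two observations.

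\emph{Gradient parts of the residual.} Interior jumps need no discrete equation, only an inverse estimate. For $P_1$ functions on a shape-regular mesh, $\nabla u_h^n$ is elementwise constant, so $h_e\|\gamma[\nabla u_h^n\cdot n]_e\|_{0,e}^2\le C\|\nabla u_h^n\|_{0,\Delta_e}^2$. The same estimate handles $\gamma\nabla u_h^n\cdot n+\gamma\nabla u_h^{n-1}\cdot n$ on boundary edges. Now $\eta^\tau_{n,\kappa}$ contains the full, non-small term $\sqrt{\tau_n}\|\nabla u_h^{n-1}\|_{0,\kappa}$, and $\nabla u_h^n=\nabla u_h^{n-1}+\nabla(u_h^n-u_h^{n-1})$. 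Hence these parts of $\tau_n(\eta^h_{n,\kappa})^2$ are bounded by $C\sum_{\kappa'\subset\Delta_\kappa}(\eta^\tau_{n,\kappa'})^2$.

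\emph{Boundary time-difference term.} The remaining term $2(u_h^n-u_h^{n-1})/\tau_n$ is where the discrete problem enters. As printed, \eqref{eq:discrete-problem} is equivalent to $\int_\Omega\gamma\nabla(u_h^{n+1}-u_h^n)\cdot\nabla v_h\,dx+\frac{2}{\tau_n}\int_\Gamma(u_h^{n+1}-u_h^n)v_h\,ds=0$ for all $v_h$. On a fixed mesh, choosing $v_h=u_h^{n+1}-u_h^n$ forces $u_h^{n+1}=u_h^n$, so this term vanishes.

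With these two observations your argument yields the stated inequality. You should record, though, why it holds. The indicator $\eta^\tau$ already contains the whole discrete energy $\tau_n\|\nabla u_h^{n-1}\|_{0,\kappa}^2$, and the printed scheme is stationary. So the result is not a genuine a posteriori error bound.
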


\begin{proof}
We have, using the previous theorem, the following bound
\begin{align*}
\norm{\nabla(u-\pi_\tau u_h)}{L^2}
&=\norm{\nabla(u-u_h+u_h-\pi_\tau u_h)}{L^2}\\
&\le\norm{\nabla(u-u_h)}{L^2}
+\norm{\nabla(u_h-\pi_\tau u_h)}{L^2}\\
&\le c'\Bigg[
\sum_{n=1}^{m}\sum_{k\in\tau_{h,n}}
\left((\eta_{n,k}^{\tau})^2
+\tau_n(\eta_{n,k}^{\tau})^2\right)\\
&\hspace{28mm}
+\norm{u_0-u_h^0}{0,\Gam}^2\Bigg]^{1/2}
+\norm{\nabla(u_h-\pi_\tau u_h)}{L^2}
\end{align*}
Now we have to bound $\norm{\nabla(u_h-\pi_\tau u_h)}{L^2}$. For
$t\in(t_{n-1},t_n)$, we have $\pi_\tau u_h(t)=u_h^n$ and
$u_h-u_h^n=\frac{t-t_n}{\tau_n}(u_h^n-u_h^{n-1})$, we have
\begin{align*}
\norm{\nabla(u_h-\pi_\tau u_h)}{0,\Om}^2
&\le\frac{(t-t_n)^2}{\tau_n^2}
\left[\sum_k\norm{\nabla(u_h^n-u_h^{n-1})}{0,k}^2\right]\\
&\le\frac{(t-t_n)^2}{\tau_n^2}
\left[\sum_k\norm{\nabla(u_h^n-u_h^{n-1})}{0,k}^2\right]
+\tau_n^2\norm{\nabla u_h^{n-1}}{0,k}^2
\end{align*}
integrating over $(t_{n-1},t_n)$, we get
\begin{align*}
\int_{t_{n-1}}^{t_n}\norm{\nabla(u_h-\pi_\tau u_h)}{0,\Om}^2\dd t
&\le\int_{t_{n-1}}^{t_n}\frac{(t-t_n)^2}{\tau_n^2}
\left[\sum_k\norm{\nabla(u_h^n-u_h^{n-1})}{0,k}^2\right]
+\tau_n^2\norm{\nabla u_h^{n-1}}{0,k}^2\\
&\le c_1\sum_k(\eta_{n,k}^{\tau})^2
\end{align*}
\newpage
Finally we conclude
\[
\norm{\nabla(u-\pi_\tau u_h)}{L^2(0,t_m,L^2(\Om))}
\le c\left[
\sum_{n=1}^{m}\sum_{k\in\tau_{h,n}}
\left((\eta_{n,k}^{\tau})^2+\tau_n(\eta_{n,k}^{\tau})^2\right)
+\norm{u_0-u_h^0}{0,\Gam}^2\right]^{1/2}
\tag*{$\square$}
\]
\end{proof}

\subsection{Upper Bounds of the Indicators}

\begin{theorem}
For all $m=1,..,N$ we have the following estimate
\[
(\eta_{n,k}^{\tau})^2
\le\norm{\nabla(u-\pi_\tau u_h)}{L^2(t_{n-1},t_n,L^2(k))}^2
+\norm{\nabla(u-u_h)}{L^2(t_{n-1},t_n,L^2(k))}^2
\]
\end{theorem}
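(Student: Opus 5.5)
The plan is to write $u_h-\pi_\tau u_h$ in closed form on each element and each time interval, and to estimate the two pieces of $\eta_{n,k}^{\tau}$ separately. On $(t_{n-1},t_n)$ the proof of the previous theorem gives $\pi_\tau u_h(t)=u_h^n$ and
\[
u_h(t)-\pi_\tau u_h(t)=\frac{t-t_n}{\tau_n}\,(u_h^n-u_h^{n-1}).
\]
Integrating the square of its gradient over $k\times(t_{n-1},t_n)$ uses $\int_{t_{n-1}}^{t_n}(t-t_n)^2\tau_n^{-2}\,\dd t=\tau_n/3$. This gives the identity
\[
\frac{\tau_n}{3}\norm{\nabla(u_h^n-u_h^{n-1})}{0,k}^2=\norm{\nabla(u_h-\pi_\tau u_h)}{L^2(t_{n-1},t_n,L^2(k))}^2 .
\]
This is exactly the square of the first summand of $\eta_{n,k}^{\tau}$. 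Inserting $\pm u$, i.e.\ $u_h-\pi_\tau u_h=(u-\pi_\tau u_h)-(u-u_h)$, and applying the triangle inequality bounds it by
\[
2\norm{\nabla(u-\pi_\tau u_h)}{L^2(t_{n-1},t_n,L^2(k))}^2+2\norm{\nabla(u-u_h)}{L^2(t_{n-1},t_n,L^2(k))}^2 .
\]
The factor $2$ can be absorbed into the constant implicit in the statement.

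The second summand, $\sqrt{\tau_n}\norm{\nabla u_h^{n-1}}{0,k}$, is the delicate one. Here I would use the continuous equation in the strong form of Theorem~1. On an interior element $k$ we have $\diver(\gamma\nabla u)=0$. Testing the discrete equation~\eqref{eq:discrete-problem} elementwise with a bubble $\psi_k$ times a constant, together with Proposition~3 (Verfurth), should control the affine part of $\gamma\nabla u_h^{n-1}$ by $\nabla(u-u_h)$ on $k$ plus local time differences. More concretely, I would write $\nabla u_h^{n-1}=\nabla(u_h^{n-1}-u_h(t))+\nabla(u_h(t)-u(t))+\nabla u(t)$. The first term is again a multiple of $\nabla(u_h^n-u_h^{n-1})$, which is already controlled by the previous paragraph. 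The second term, after integrating in time, gives the $\norm{\nabla(u-u_h)}{L^2(t_{n-1},t_n,L^2(k))}^2$ contribution.

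The main obstacle is the remaining term $\sqrt{\tau_n}\norm{\nabla u}{L^\infty(t_{n-1},t_n,L^2(k))}$, which is not an error quantity. To handle it I would try to absorb it as follows. In the Crank--Nicolson form~\eqref{eq:discrete-problem}, the term $\int\gamma\nabla u_h^{n-1}\nabla v_h$ combines with $\int\gamma\nabla u_h^{n}\nabla v_h$ into the difference $\nabla(u_h^n-u_h^{n-1})$. So I would first try to show that, up to the boundary residual, the relevant quantity is really $\nabla(u_h^n-u_h^{n-1})$, and then reduce to the first step. If that fails, I would accept the bound with an extra data-oscillation-type term, or restrict to the regime in which $u$ is stationary on $k$. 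I expect this step to be where the estimate is genuinely weakest.
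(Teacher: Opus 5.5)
Your treatment of the first summand is correct and is essentially what the paper does. You have $\frac{\tau_n}{3}\norm{\nabla(u_h^n-u_h^{n-1})}{0,k}^2=\norm{\nabla(u_h-\pi_\tau u_h)}{L^2(t_{n-1},t_n,L^2(k))}^2$, and splitting $u_h-\pi_\tau u_h=(u-\pi_\tau u_h)-(u-u_h)$ bounds this by twice the right-hand side. The statement has no constant. The paper loses your factor $2$ only through the false inequality $\abs{a+b}^2\le\abs{a}^2+\abs{b}^2$, and it also replaces $(\eta_{n,k}^{\tau})^2=(a+b)^2$ by $a^2+b^2$.

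The genuine gap is the second summand $\sqrt{\tau_n}\norm{\nabla u_h^{n-1}}{0,k}$, which your proposal leaves open, and none of your routes can close it. Bubble-function arguments bound residuals, i.e.\ quantities that vanish when $u_h$ is replaced by $u$. This term is not one of them: it is essentially $\sqrt{\tau_n}\norm{\nabla u(t_{n-1})}{0,k}$, the size of the solution itself, which is precisely the piece you could not absorb. Pairing $\nabla u_h^{n-1}$ with $\nabla u_h^{n}$ in the error identity does not help either, because the left-hand side is the indicator as defined, and it still contains this term. Nor does the paper supply the missing idea. After bounding the first summand pointwise by $C$, it notes the trivial inequality $A\le A+B$ with $B=\abs{\nabla u_h^{n-1}}^2$. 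It then integrates as if $A+B\le C$ had been shown, although only $A\le C$ was.

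In fact the estimate is false, so your fallback (an extra non-error term, or a restricted setting) is unavoidable. Push your own pairing observation one step further. On a fixed mesh, \eqref{eq:discrete-problem} with $d=u_h^1-u_h^0$ reads $\int_\Om\gamma\nabla d\nabla v_h\dd x+\frac{2}{\tau_1}\int_\Gam d\,v_h\dd s=0$ for all $v_h$, and taking $v_h=d$ gives $d=0$. Hence $u_h=\pi_\tau u_h=u_h^0$ on $(0,t_1)$ and $\eta_{1,k}^{\tau}=\sqrt{t_1}\norm{\nabla u_h^0}{0,k}$. Summing the claimed estimate over $k$ and dividing by $t_1$ would give
\[
\norm{\nabla u_h^0}{0,\Om}^2\le 2\max_{0\le t\le t_1}\norm{\nabla(u(t)-u_h^0)}{0,\Om}^2 .
\]
Suppose $u\in C([0,T];H^1(\Om))$, as the regularity assumed in the a priori section guarantees, and $u_h^0\to u(0)$ in $H^1(\Om)$. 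Then the right-hand side tends to $0$ as $h,t_1\to0$, while the left-hand side tends to $\norm{\nabla u(0)}{0,\Om}^2>0$ for any non-constant $u_0$. So only the first summand of $\eta_{n,k}^{\tau}$ admits a local lower bound of this kind (with constant $2$). The term $\sqrt{\tau_n}\norm{\nabla u_h^{n-1}}{0,k}$ cannot be bounded by local errors at all.
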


\begin{proof}
We have
\[
\frac{t-t_n}{\tau_n}\nabla(u_h^n-u_h^{n-1})
=\nabla(u_h-\pi_\tau u_h)
=\nabla(u-u_h)+\nabla(u-\pi_\tau u_h)
\]
then
\begin{align*}
\abs{\frac{t-t_n}{\tau_n}\nabla(u_h^n-u_h^{n-1})}^2
&=\abs{\nabla(u-u_h)+\nabla(u-\pi_\tau u_h)}\\
&\le\abs{\nabla(u-u_h)}^2+\abs{\nabla(u-\pi_\tau u_h)}^2
\end{align*}
but
\[
\left(\frac{t-t_n}{\tau_n}\right)^2
\abs{\nabla(u_h^n-u_h^{n-1})}^2
\le
\left(\frac{t-t_n}{\tau_n}\right)^2
\abs{\nabla(u_h^n-u_h^{n-1})}^2
+\abs{\nabla u_h^{n-1}}^2
\]
integrating over $k$ and on $(t_{n-1},t_n)$ we get
\begin{align*}
&\int_{t_{n-1}}^{t_n}\int_k
\left(\frac{t-t_n}{\tau_n}\right)^2
\abs{\nabla(u_h^n-u_h^{n-1})}^2
+\int_{t_{n-1}}^{t_n}\int_k\abs{\nabla u_h^{n-1}}^2\\
&\hspace{30mm}\le
\int_{t_{n-1}}^{t_n}\int_k
\left(\abs{\nabla(u-u_h)}^2+\abs{\nabla(u-\pi_\tau u_h)}^2\right)
\end{align*}
then
\[
\begin{aligned}
&\frac{\tau_n}{3}\norm{\nabla(u_h^n-u_h^{n-1})}{0,k}^2
+\tau_n\norm{\nabla u_h^{n-1}}{0,k}^2\\
&\quad\le\norm{\nabla(u-\pi_\tau u_h)}
{L^2(t_{n-1},t_n,L^2(k))}^2
+\norm{\nabla(u-u_h)}
{L^2(t_{n-1},t_n,L^2(k))}^2
\end{aligned}
\]
\end{proof}

\medskip
\noindent\textbf{REMARK:} The numerical simulation will be done in a forthcoming paper.

\section*{References}

\begin{list}{}{\setlength{\leftmargin}{1.6em}\setlength{\itemindent}{-1.6em}
  \setlength{\itemsep}{0.55em}\setlength{\parsep}{0pt}}
\item Cherif, M. A., El Arwadi, T., Emmamirad, H., \& Sac- Epee J. M. (2014).
Dirichlet-to-Neumann semigroup acts as a magnifying glass.
\emph{Semigroup Forum}, \textbf{88}(3), 753-767.

\item Clement, P. (1975).
Approximation by finite element functions using local regularization.
\emph{R.A.I.R.O. Anal. Numer.}, \textbf{9}, 77-84.

\item EL Arwadi, T., DIB, S., \& SAYAH, T. (2015).
A priori and a posteriori analysis for a linear elliptic problem with dynamic boundary condition.
\emph{Appl. Math. Inf. Sci.}, \textbf{9}(6), 3305317.

\item Emmamirad, H., \& Shariftabar, M. (2013).
On Explicit representation and Approximation of Dirichlet-to-Neumann Semigroup.
\emph{Semigroup Forum}, \textbf{86}(1), 192-201.

\item Hecht, F. (2012).
New development in Freefem++.
\emph{Journal of Numerical Mathematics}, \textbf{20}, 251-266.

\item Lax, P. D. (2002).
\emph{Functional Analysis}, Wiley Inter-science, New-York.

\item Mishra, V. N. (2007).
\emph{Some Problems on Approximations of Functions in Banach Spaces},
Ph.D. Thesis, Indian Institute of Technology, Roorkee - 247 667, Uttarakhand, India.

\item Mishra, V. N., \& Mishra, L. N. (2012).
Trigonometric Approximation of Signals (Functions) in Lp$p_1$ norm.
\emph{International Journal of Contemporary Mathematical Sciences}, \textbf{7}(19), 909 C 918.

\newpage

\item Mishra, V. N., Khan, H. H., Khatri, K., \& Mishra, L. N. (2013).
Degree of conjugate of signals (functions) belonging to the generalized weighted Lipschitz class by
$(C,1)(E,q)$ means if conjugate trigonometric Fourier series.
\emph{Bulletin of Mathematical Analysis and Applications}, \textbf{5}(4), 40-53.

\item Ostrowski, A. (1940).
Recherches sur la m\'ethode de Graeffe et les z\'eros des polyn\^omes et des s\'eries des Laurent.
\emph{Acta Math.}, \textbf{72}, 9957.

\item Verfurth, R. (1996).
\emph{A posteriori error estimates and Adaptive Mesh-Refinement Techniques}, Wiley and Teubner mathematics.

\item Vrabie, I. I. (2003).
\emph{$C_0$-Semigroups and applications}, North- Holland, Amsterdam.
\end{list}

\end{document}